\newtheorem{theorem}{Theorem}
\newtheorem{corollary}[theorem]{Corollary}
\newtheorem{definition}[theorem]{Definition}
\newtheorem{Defns}[theorem]{Definitions}
\newtheorem{example}[theorem]{Example}
\newtheorem{examples}[theorem]{Examples}
\newtheorem{lemma}[theorem]{Lemma}
\newtheorem{notation}[theorem]{Notation}
\newtheorem{proposition}[theorem]{Proposition}
\newtheorem{remark}[theorem]{Remark}
\newenvironment{proof}[1][Proof]{\noindent\textbf{#1.} }{\ \rule{0.5em}{0.5em}}
\newdimen\dummy
\begin{document}

\title{Types of Irreducible Divisor Graphs of Noncommutative
Domains, II}
\author{A. Naser$^{1}$, R. E. Abdel-Khalek$^{2}$, R. M. Salem$^{2}$, and A.
M. Hassanein$^{2}$ \\
$^{1}$Department of Mathematics, Faculty of Women for Arts, \\
Science and Education, Ain Shams University, Cairo, Egypt\\
$^{2}$Department of Mathematics, Faculty of Science,\\
Al-Azhar University, Nasr City 11884, Cairo, Egypt\\
asmaanaser99@yahoo.com, ramy\_ama@yahoo.com,\\
rsalem\_02@hotmail.com, mhassanein\_05@yahoo.com\\
\textbf{Dedicated to the memory of our professor M. H. Fahmy}}
\maketitle

\begin{abstract}
In this paper, we continue investigation of the directed and undirected irreducible divisor graph concepts $G(x)$ and $\Gamma (x)$ of $x\in D^{\ast} \backslash U(D)$, respectively, which were introduced in \cite{paperphd1}. Consequently, we introduce two generalizations of these concepts. The first one is the irreducible divisor simplicial complex $S(x)$ of $x\in D^{\ast} \backslash U(D)$ in a noncommutative atomic domain $D$, which simultaneously extends the commutative case that was introduced by R. Baeth and J. Hobson in \cite{simplicial}. The second one is the directed and undirected $\tau $-irreducible divisor graphs $G_{\tau }(x)$ and $\Gamma _{\tau }(x)$ of $x\in D^{\ast} \backslash U(D)$, respectively, in a noncommutative $\tau $-atomic domain $D$ with a symmetric and associate preserving relation $\tau $ on $D^{\ast} \backslash U(D)$. Those graphs also extend the commutative case that was introduced by C. P. Mooney in \cite{generalirreducible}. Furthermore, we extend the results of \cite{simplicial} and \cite{generalirreducible} to give a characterization of n-unique factorization domains via those two generalizations.
\end{abstract}
\textbf{Keywords: }Factorization; Noncommutative domain; Atomic domain; UFD; Noncommutative UFD; Directed graph; Undirected graph; Irreducible
divisor simplicial complex; $\tau $-Irreducible divisor graph.\\
\newline
\textbf{Mathematics Subject Classification 2020: }16U70; 16U30; 13F15;13A05; 05C62; 05C20
.
\section{Introduction}
Throughout this paper,
all domains are associative and noncommutative with identity unless otherwise stated. We denote the nonzero elements of a domain $D$, the group of units in $D$, and the nonzero nonunit elements of $D$ by $D^{\ast }$, $U(D)$, and $D^{\#}$, respectively. The concept of an irreducible divisor graph of $x\in D^{\#}$ in a commutative atomic domain $D$, denoted by $G(x)$, whose vertices are the nonassociate irreducible divisors of $x$ and two vertices $y$ and $z$ have an edge between them if and only if $yz$ divides $x$, see \cite{irreducible}. The author in \cite{irreducible} investigated the interplay between the ring theoretic properties of a domain $D$ and the graph theoretic properties of $G(x)$ for all $x$ in $D^{\#}$. More precisely, they showed that if $D$ is a commutative atomic domain, then $D$ is a UFD if and only if $G(x)$ is complete for every $x\in D^{\#}$ if and only if $G(x)$ is connected for every $x\in D^{\#}$ \cite[Theorem 5.1]{irreducible}. Then, M. Axtell et al. in \cite{comprees} introduced an alternate irreducible divisor graph called the compressed irreducible divisor graph of $x\in D^{\#}$ in a commutative atomic domain $D$, denoted by $G_{c}(x)$. Also, they showed that if $D$ is a commutative atomic domain, then $D$ is a UFD if and only if $G_{c}(x)$ $\cong K_{1}$, the complete graph of one vertex, for all $x\in D^{\#}$ \cite[Theorem 5.7]{comprees}.\\
After that, the irreducible divisor graph has been studied and developed by many authors, see \cite{simplicial} and \cite{generalirreducible}. In \cite{simplicial}, Baeth and Hobson introduced the irreducible divisor simplicial complex of $x\in D^{\#}$ in a commutative atomic domain $D$, whose vertices are the nonassociate irreducible $y$ divisors of $x$ and any set of vertices $\{y_{1},y_{2},...,y_{n}\}$ forms a face if and only if $y_{1}y_{2}...y_{n}$ divides $x$, denoted by $S(x)$. In light of this concept, they identified the corresponding characterizations of a UFD. This concept gives a higher-dimensional notion of the irreducible divisor graph $G(x)$. On the other hand, C. P. Mooney in \cite{generalirreducible} introduced the concept of a $\tau $-irreducible divisor graph of $x\in D^{\#}$ in a commutative $\tau $-atomic domain $D$ with a symmetric and associate preserving relation $\tau $ on $D^{\#}$. The $\tau $-irreducible divisor graph of $x\in D^{\#}$ is the graph $G_{\tau }(x)$, whose vertices are the nonassociated $\tau $-irreducible divisors of $x$ and two vertices $y$ and $z$ have an edge between them if and only if $yz$ $\tau $-divides $x$. They showed that if $D$ is a commutative $\tau $-atomic domain, then $D$ is a $\tau $-UFD if and only if $G_{\tau }(x)$ is complete for all $x\in D^{\#}$ if and only if $G_{\tau }(x)$ is connected for all $x\in D^{\#}$ \cite[Theorem 4.3]{generalirreducible}.\\
In \cite{paperphd1}, the authors extended the concepts and results of the irreducible divisor graph and the compressed irreducible divisor graph to the noncommutative setting. In this paper, we generalize the concepts and results presented in \cite{simplicial} and \cite{generalirreducible} to the noncommutative setting. In Section 2, we present the basic definitions and notations that are exploited in our study. Section 3 gives the concept of the irreducible divisor simplicial complex $S(x)$ of $x\in D^{\#}$ in a noncommutative atomic domain $D$. It is a higher-dimensional version of the notion of the noncommutative irreducible divisor graph $G(x)$ appeared in \cite{paperphd1}. Moreover, we give some examples of the noncommutative irreducible divisor simplicial complex of $x\in D^{\#}$ in a noncommutative atomic domain $D$. We also generalize the main results in \cite{simplicial} to the case of a noncommutative domain. In Section 4, we introduce the directed and undirected $\tau $-irreducible divisor graphs $\Gamma _{\tau}(x) $ and $G_{\tau }(x)$ of $x\in D^{\#}$, respectively, in a noncommutative $\tau $-atomic domain $D$ with a symmetric and associate preserving relation $\tau $ on $D^{\#}$. Also, we give some examples of those graphs. Moreover, we show that if $D$ is a $\tau $-n-atomic domain, then $D$ is a $\tau $-n-FFD if and only if $\Gamma _{\tau }(x)$ is finite for all $x\in D^{\#}$ if and only if for all $x\in D^{\#}$, outdeg$(w)$ and indeg$(w)$ are finite for all $w\in V(\Gamma _{\tau }(x))$  if and only if for all $x\in D^{\#}$, outdegl$(w)$ and indegl$(w)$ are finite for all $w\in V(\Gamma _{\tau }(x))$. Also, we obtain the corresponding result to Theorem 4.3 in \cite{generalirreducible} in a noncommutative domain. More precisely, we study under what conditions the following are equivalent:
\begin{enumerate}
 \item $D$ is a $\tau $-n-UFD;
\item $\Gamma _{\tau }(x)$ is a tournament for all $x\in D^{\#}$;
\item $\Gamma _{\tau }(x)$ is unilaterally connected for all $x\in D^{\#}$;
\item $\Gamma _{\tau }(x)$ is weakly connected for all $x\in D^{\#}$.
\end{enumerate}
We also give the corresponding result in the undirected case.

\section{Definitions and Notations}
This section contains the basic definitions relating to algebra and graphs, which will be used throughout the paper. Unless otherwise stated, all domains are noncommutative and associative with identity.

\begin{Defns}
\begin{description}
\item[i.] An element $a\in D$ is called irreducible (atom) if $a$ is a nonzero nonunit that is not the product of two nonunits. $Irr(D)$ denotes the set of all irreducible elements in a domain $D$ and $\overline{Irr}(D)$ indicates a (pre-chosen) set of coset representatives, one representative from each coset in the collection $\{aU(D):a\in Irr(D)\}$.
\item[ii.] An element $a\in D$\ is called a right (resp. left) divisor of $b\in D$, denoted by $a\mid _{r}b$ (resp. $a\mid _{l}b$), if $b=ac$ (resp. $b=ca$), and the element $a$\ is a divisor of $b$,\ denoted by $a\mid b$, if $b=cad$ where $c,d\in D$.
\item[iii.] An element $a\in D$\ is called right (resp. left) associated with $b\in D$, denoted by $a\sim _{r}b$ (resp. $a\sim _{l}b$), if $a=bu$ (resp. $a=ub$) for some unit $u$ $\in U(D),$ and the elements $a,b$ are associates,\ denoted by $a\sim b$, if there are $u,v\in U(D)$ such that $a=vbu$.
\item[iv.] An element $a\in D$\ is called normal in a domain $D$ if $aD=Da$, and $D$ is normal if all its elements are normal.
\item[v.] A normal element $p\in D$\ is called prime if $p\mid ab$ implies $p\mid a$ or\ $p\mid b$, where $a,b\in D$.
\end{description}
\end{Defns}
\begin{definition} \cite{natomic}
A domain $D$ is called a unique factorization domain, for short UFD (resp. normal unique factorization domain, for short n-UFD), if $:$\\
(1) the domain $D$ is atomic (resp. normal atomic, for short n-atomic), i.e. for every $r\in D^{\#}$ there exist irreducible (resp. normal irreducible) elements $r_{1},r_{2},\dots ,r_{l}$ such that $r=r_{1}r_{2}\dots r_{l}$, and\\
(2) if $r\in D^{\#}$ has two atomic (resp. n-atomic) factorizations, i.e. $r=r_{1}r_{2}\dots .r_{l}=t_{1}t_{2}\dots .t_{m}$ \ where $r_{1},r_{2},\dots,r_{l},t_{1},t_{2},\dots ,t_{m}$ are irreducible (resp. normal irreducible) elements, then $l=m$ and there is a permutation $\sigma \in S_{l}$ such that for each $i=1,\dots ,l,$ $r_{i}$ is associated with $t_{\sigma (i)}$.
\end{definition}
\begin{notation}
Let $(V,E)$ be a graph with the set of vertices $V$ and the set of edges $E$. We denote an edge in the digraph $\Gamma $ from vertex $a$ to vertex $b$ as $(a,b)$, noting that the edge $(a,b)$ is not the same as the edge $(b,a)$. While the edge in the undirected graph $G$ between vertices $a$ and $b$ as $\{a,b\}$, note that the edge $\{a,b\}$ is the same as the edge $\{b,a\}$.
\end{notation}
\begin{Defns}
\begin{description}
\item[i.] The digraph $\Gamma $ is called complete (sometimes called strongly complete), denoted by $K_{m}$ where $m$ is the number of vertices, if for every two distinct vertices $a$ and $b$ of $\Gamma $, there exist the edges $(a,b)$ and $(b,a)$ in $\Gamma $.
\item[ii.] The digraph $\Gamma $ is called a tournament, denoted by $T_{m}$ where $m$ is the number of vertices, if for every two distinct vertices $a$ and $b$ of $\Gamma $, there is at least one of the two edges $(a,b)$ and $(b,a)$ in $\Gamma $.
\item[iii.] The digraph $\Gamma $ is called connected (sometimes called strongly connected) if for every two distinct vertices $a$ and $b$ of $\Gamma $, there is a directed path from $a$ to $b$ and another from $b$ to $a$.
\item[iv.] The digraph $\Gamma $ is called unilaterally connected if for every two distinct vertices $a$ and $b$ of $\Gamma $, there is a directed path from $a$ to $b$ or from $b$ to $a$.
\item[v.] The digraph $\Gamma $ is called weakly connected if for every two distinct vertices $a$ and $b$ of $\Gamma $, there is a path between $a$ and $b$ without direction.
\item[vi.] Let $a\in V(\Gamma )$. We have the following degrees,
\begin{equation*}
indeg(a):=|\{b\in V(\Gamma )|\text{ }a\neq b\text{ and }(b,a)\in E(\Gamma)\}|\text{ ,}
\end{equation*}
i.e. the number of edges coming to the vertex $a$, and
\begin{equation*}
outdeg(a):=|\{c\in V(\Gamma )|\text{ }a\neq c\text{ and }(a,c)\in E(\Gamma
)\}|\text{ ,}
\end{equation*}
i.e. the number of edges emanating from the vertex $a$. If a vertex $a$ has $l$ loops, then
\begin{equation*}
indegl(a):=l+indeg(a)\text{ \ \ and \ \ }outdegl(a):=l+outdeg(a).
\end{equation*}
\item[vii.] Two digraphs $\Gamma _{1}$ and $\Gamma _{2}$ are said to be isomorphic, denoted by $\Gamma _{1}\cong \Gamma _{2}$, if there is a bijective map $\Phi $ between the vertex set of $\Gamma _{1}$ and the vertex set of $\Gamma _{2}$ such that for any two vertices $a$ and $b$ of $\Gamma_{1}$, $(a,b)$ is an edge in $\Gamma _{1}$ if and only if $(\Phi (a),\Phi (b))$ is an edge in $\Gamma _{2}$.
\item[viii.] The undirected graph $G$ is called complete, denoted by $K_{m}$ where $m$ is the number of vertices, if any two distinct vertices are connected by an edge (possibly with loops).
\item[ix.] The undirected graph $G$ is called connected if there is a path between every two distinct vertices.
\item[x.] Let $a\in V(G)$. We have two ways of counting the degree of this vertex, 
\begin{equation*}
deg(a):=|\{b\in V(G):a\neq b\text{ and }\{a,b\}\in E(G)\}|\text{ ,}
\end{equation*}
i.e. the number of distinct vertices adjacent to $a$. If a vertex $a$ has $l$ loops, then 
\begin{equation*}
degl(a):=l+deg(a).
\end{equation*}
\item[xi.] Two graphs $G_{1}$ and $G_{2}$ are said to be isomorphic, denoted by $G_{1}\cong G_{2}$, if there is a bijective map $\Psi $ between the vertex set of $G_{1}$ and the vertex set of $G_{2}$ such that for any two vertices $a$ and $b$ of $G_{1}$, $\{a,b\}$ is an edge in $G_{1}$ if and only if $\{\Psi (a),\Psi (b)\}$ is an edge in $G_{2}$.\newline
\end{description}
\end{Defns}

\section{Irreducible Divisor Simplicial Complexes}
In \cite{simplicial}. N. Baeth and J. Hobson introduced the concept of an irreducible divisor simplicial complex of any nonzero nonunit element $x$ in a commutative domain $D$. A simplicial complex $S$ is the ordered pair $(V,F)$, where $V$ is the set of vertices and $F$ is the set of faces, where a face is a collection of subsets of $V$ satisfying:\newline
(1) $\{v\}\in F$ for all $v\in V$, and\newline
(2) if $\ \delta \in F$ and $\beta \subseteq \delta $, then $\beta \in F$.\newline
These two conditions mean that any vertex will be considered a face, and any subset of the face is also a face.\newline
A face $\delta \in F$ that is maximal with regard to inclusion is called a facet of $S$. A face $\delta =\{a_{1},a_{2},...a_{d+1}\}$ is said to have dimension $d$. For a nonnegative integer $l$, the $l-$skeleton of $S$, denoted by $SK_{l}$, is the subcomplex of $S$ consisting of all faces of $S$ whose dimension is at most $l$.\\
\newline
For a domain $D$ (not necessarily commutative), we now introduce the notion of an irreducible divisor simplicial complex of any $x\in D^{\#}$.
\begin{definition}
Let $D$ be an atomic domain and $x\in D^{\#}$. The irreducible divisor simplicial complex of $x$, denoted by $S(x)$, is the ordered pair $(V,F)$ with the set of vertices $V=\{y\in \overline{Irr}(D):y\mid x\}$, and the set of faces $F=\{\{y_{1},y_{2},...,y_{n}\}:y_{\sigma (1)}y_{\sigma (2)}...y_{\sigma (n)}\mid x$ for some permutation $\sigma \in S_{n}\}$.\newline
In addition, we place $n-1$ loops on vertex $y$ if $y^{n}\mid x$ but $y^{n+1}\nmid x$, and by convention, we also put $\emptyset \in F$.
\end{definition}
\subsection{Examples}
\begin{enumerate}
\item Let $D=\Bbb{K}[x,y]/<xy-yx-1>$ be the first weyl algebra over a field $\Bbb{K}$.\\
(A) Consider the element $g(x,y)=xy+xy^{2}\in D^{\#}$, the only nonassociate irreducible factorizations of $g(x,y)$ into irreducibles are $xy(1+y)$ and $x(1+y)y$. Therefore, $G(g(x,y))$ is as in Figure \ref{fig:subim1}, and $S(g(x,y))$ is as in Figure \ref{fig:subim2}. 
\begin{figure}[H]
\begin{subfigure}{0.5\textwidth}
 \includegraphics[width=0.9\linewidth, height=4cm]{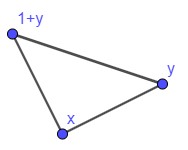} 
\caption{$G(g(x,y))$}
\label{fig:subim1}   
\end{subfigure}
\begin{subfigure}{0.5\textwidth}
 \includegraphics[width=0.9\linewidth, height=4cm]{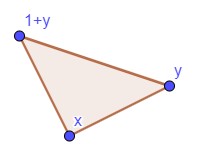}
\caption{$S(g(x,y))$}
\label{fig:subim2}   
\end{subfigure}

\caption{$G(g(x,y))$ and $S(g(x,y))$ in $\Bbb{K}[x,y]/<xy-yx-1>$}
\label{fig:image2}
\end{figure}
Note that in \ref{fig:subim2}, $S(g(x,y))=(V,F)$, with $V=\{x,y,1+y\}$ and $F=\{\emptyset \}\cup F_{0}\cup F_{1}\cup F_{2}$, where $F_{i}$ denotes the set of faces of $S(g(x,y))$ with dimension $i$:\newline
$F_{0}=\{\{x\},\{y\},\{1+y\}\},$\newline
$F_{1}=\{\{x,y\},\{x,1+y\},\{y,1+y\}\},$\newline
$F_{2}=\{\{x,y,1+y\}\}.$\newline
The facet of $S(g(x,y))$ is $\{x,y,1+y\}$.\\
\newline
(B) Consider the element $h(x,y)=xy+xyx\in D^{\#}$, the only nonassociate irreducible factorization of $h(x,y)$ into irreducibles is $xy(1+x)$. Therefore, $G(h(x,y))$ is as in Figure \ref{fig:subim3}, and $S(h(x,y))$ is as in Figure \ref{fig:subim4}.
\begin{figure}[H]
\begin{subfigure}{0.5\textwidth}
\includegraphics[width=0.9\linewidth, height=4cm]{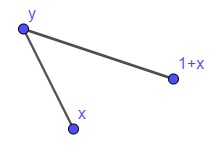} 
\caption{$G(h(x,y))$}
\label{fig:subim3}
\end{subfigure}
\begin{subfigure}{0.5\textwidth}
\includegraphics[width=0.9\linewidth, height=4cm]{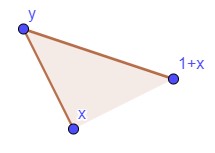}
\caption{$S(h(x,y))$}
\label{fig:subim4}
\end{subfigure}

\caption{$G(h(x,y))$ and $S(h(x,y))$ in $\Bbb{K}[x,y]/<xy-yx-1>$}
\label{fig:image3}
\end{figure}
\item Let $D=\mathbb{Q} \left\langle x,y\right\rangle $ be a free associative algebra in two indeterminates over a field $\mathbb{Q}$. Consider the element $f(x,y)=x^{4}-xyx+5x^{2}\in D^{\#}$, the only nonassociate irreducible factorization of $f(x,y)$ into irreducibles is $x(x^{2}-y+5)x$ . Therefore, $G(f(x,y))$ and $S(f(x,y))$ are as in Figure \ref{fig:image30}.
\begin{figure}[H]
\begin{center}
 \includegraphics[width=0.5\linewidth, height=2cm]{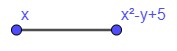}
\caption{$G(f(x,y))$ and $S(f(x,y))$ in $D=\mathbb{Q} \left\langle x,y\right\rangle $}
\label{fig:image30}   
\end{center}
\end{figure}

\item Let $D=H_{\mathbb{Z}}=\mathbb{Z}[1,i,j,k]$ be the Hamilton Quaternion ring over $\mathbb{Z}$.\newline
(A) Consider the element $x=2i+2k\in D^{\#}$. Using norms, we see that the factors into irreducibles are only $\alpha ^{2}\beta $, $\alpha \beta \delta $, $\alpha \beta \alpha $, $-j\beta \alpha ^{2}$, $-k\beta ^{3}$, $\beta \delta ^{2}$, $\delta \alpha \delta $, $i\delta \beta \alpha $, and $j\delta ^{2}\beta $, where $\alpha =1+i$ , $\beta =1+j$, and $\delta =1+k$. Therefore, $G(x)$ is as in Figure \ref{fig:subim5}, and $S(x)$ is as in Figure \ref{fig:subim6}.
\begin{figure}[H]
\begin{subfigure}{0.5\textwidth}
\includegraphics[width=0.9\linewidth, height=4cm]{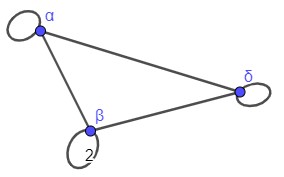} 
\caption{$G(x)$}
\label{fig:subim5}
\end{subfigure}
\begin{subfigure}{0.5\textwidth}
\includegraphics[width=0.9\linewidth, height=4cm]{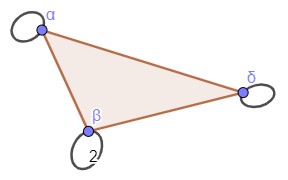}
\caption{$S(x)$}
\label{fig:subim6}
\end{subfigure}
\caption{$G(x)$ and $S(x)$ in $D=\mathbb{Z}[1,i,j,k]$}
\label{fig:image3}
\end{figure}
(B) Consider the element $y=1+i+j+k\in D^{\#}$. Then, again using norms, we see that the only nonassociate irreducible factorizations of $y$ into irreducibles are $(1+i)(1+j)$, $(1+j)(1+k)$ and $(1+k)(1+i)$. Therefore, $G(y)$ and $S(y)$ are as in Figure \ref{fig:image4}.
\begin{figure}[H]
\begin{center}
 \includegraphics[width=0.5\linewidth, height=4cm]{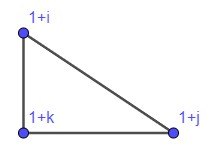}
\caption{$G(y)$ and $S(y)$ in $D=\mathbb{Z}[1,i,j,k]$}
\label{fig:image4}   
\end{center}
\end{figure}
\item Let $R=\mathbb{Z}[ 1,i,j,k]$, and $D=R\left\langle x,y,z\right\rangle $ be a free associative algebra in three indeterminates over the domain $R$.\newline
The only nonassociate irreducible factorizations of $f(x,y,z)=yz+x^{2}yz$ into irreducibles are $(1+ix)(1-ix)yz$, $(1+jx)(1-jx)yz$, and $(1+kx)(1-kx)yz $. Therefore, $G(f(x,y,z))$ is as in Figure \ref{fig:subim7}, and $ S(f(x,y,z))$ is as in Figure \ref{fig:subim8}.
\begin{figure}[H]
\begin{subfigure}{0.5\textwidth}
\includegraphics[width=0.9\linewidth, height=4cm]{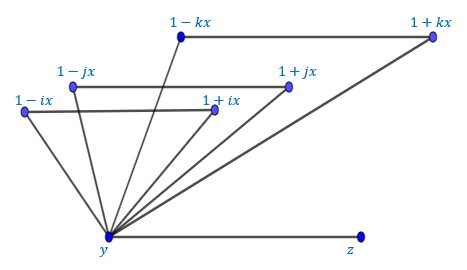} 
\caption{$G(f(x,y,z))$}
\label{fig:subim7}
\end{subfigure}
\begin{subfigure}{0.5\textwidth}
\includegraphics[width=0.9\linewidth, height=4cm]{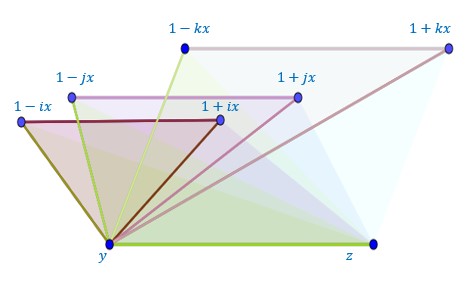}
\caption{$S(f(x,y,z))$}
\label{fig:subim8}
\end{subfigure}
\caption{$G(f(x,y,z))$ and $S(f(x,y,z))$ in $D=R\left\langle x,y,z\right\rangle $}
\label{fig:image3}
\end{figure}
Here we have that, $S(f(x,y,z))=(V,F)$ with $V=\{1\pm ix,1\pm jx,1\pm kx,y,z\}$ and $F=\{\emptyset \}\cup F_{0}\cup F_{1}\cup F_{2}\cup F_{3}$, where\newline
$F_{0}=\{\{1+ix\},\{1-ix\},\{1+jx\},\{1-jx\},\{1+kx\},\{1-kx\},\{y\},\{z\}\}, $\newline
$F_{1}=\{\{1\pm ix\},\{1+ix,y\},\{1-ix,y\},\{1\pm
jx\},\{1+jx,y\},\{1-jx,y\},\{1\pm kx\},\{1+kx,y\},\{1-kx,y\},\{y,z\}\},$\newline
$F_{2}=\{\{1\pm ix,y\},\{1\pm jx,y\},\{1\pm kx,y\},\{1+ix,y,z\},\{1-ix,y,z\},\{1+jx,y,z\},\{1-jx,y,z\},\{1+kx,y,z\},\{1-kx,y,z\}\}$, and\newline
$F_{3}=\{\{1\pm ix,y,z\},\{1\pm jx,y,z\},\{1\pm kx,y,z\}\}.$\newline
The facets of $S(f(x,y,z))$ are $\{1\pm ix,y,z\}$, $\{1\pm jx,y,z\}$, and $\{1\pm kx,y,z\}$.

\item Let $D$ be a UFD and $x$ any nonzero nonunit in $D$. Then we may factor $x$ as $x_{1}x_{2}...x_{m}$ where $x_{1},x_{2},...,x_{m}$ are irreducibles (not necessarily distinct). This is the only way to factor $x$ into irreducibles (unique up to associates of the irreducibles and up to the length of the factorization). Then $G(x)$ is connected, and every set of consecutive distinct irreducibles $\{x_{i},x_{i+1},x_{i+2},...,x_{k}:1\leq i\leq k\leq m\}$ forms a face in $S(x)$.
\end{enumerate}

\subsection{Results}
Let $D$ be a noncommutative atomic domain, and $S(x)=(V,F)$ an irreducible simplicial complex of $x\in D^{\#}$. Clearly, $F$ is a collection of subsets of $V$. First, if $y\in V$ , then $\{y\}\in F$ since $y\mid x$ and hence vertices are faces. Second, suppose that $\delta \in F$ and $\beta \subseteq \delta $, then $\beta $ is not necessarily belonged to $F$ as shown in Figure \ref{fig:subim3} ($\{x,y,1+x\}\in F$ but $\{x,1+x\}\notin F$). Therefore, the irreducible simplicial complexes are not simplicial complexes standard. Whereas if $D$ is an n-atomic domain, then the irreducible simplicial
complexes are simplicial complexes standard.\newline
\begin{remark}
\label{hhh}For an atomic domain $D$ and $x\in D^{\#}$, let $G(x)=(V,E)$ denote the undirected irreducible divisor graph of $x$, and $S(x)=(V^{^{\prime }},F)$ denote the irreducible divisor simplicial complex of $x$. By definition, $V^{^{\prime }}=V=\{y\in \overline{Irr}(D):y\mid x\}$. Furthermore, $E\subseteq F$ since if $\{a,b\}\in E$, we have $ab\mid x$ or $ba\mid x$ and hence $\{a,b\}\in F$. Moreover, if $\{a,b\} $ is a face of $F$ in one dimensional, then $ab\mid x$ or $ba\mid x$, and hence $\{a,b\}\in E$. Hence, the one-dimensional faces of $S(x)$ are precisely the edges of $G(x)$. So we have \ $SK_{1}(S(x))=G(x)$ (i.e. the 1-skeleton of $S(x)$ is precisely $G(x)$).
\end{remark}
Therefore, we see that the concept of the irreducible divisor simplicial complex is a higher dimensional analog of the undirected irreducible divisor graph. Because this structure generally contains components of dimensions two and higher. Therefore, $S(x)$ carries more information than $G(x)$ about the factorizations of the element $x$ in the domain $D$.\newline

The following result shows that it is easier to find factorizations of $x$ by looking at $S(x)$.
\begin{proposition}
\label{normal}For an n-atomic domain $D$ and $x\in D^{\#}$, let $A=\{a_{1},a_{2},...,a_{m}\}$ be a facet of the irreducible divisor simplicial complex $S(x)$. Then there exists a factorization of $x$ such that $\overline{Irr}(x)=A$.
\end{proposition}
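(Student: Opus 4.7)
The plan is to use the face-witnessing factorization provided by $A$ and enlarge it via the n-atomic hypothesis. Since $A$ is a face of $S(x)$, there exist $\sigma \in S_{m}$ and $c,d \in D$ with $x = c\,a_{\sigma(1)}\cdots a_{\sigma(m)}\,d$. Writing $w := a_{\sigma(1)}\cdots a_{\sigma(m)}$ and using n-atomicity, I would factor $c=c_{1}\cdots c_{s}$ and $d=d_{1}\cdots d_{t}$ into normal irreducibles (if $c$ or $d$ is itself a unit, absorb it into a neighbouring atom as an associate), obtaining
\[
x \;=\; c_{1}\cdots c_{s}\; a_{\sigma(1)}\cdots a_{\sigma(m)}\; d_{1}\cdots d_{t}.
\]
Since every $a_{i}$ already appears in this expression, the proposition reduces to showing that each $\overline{Irr}(D)$-representative $\tilde c_{i}$ of $c_{i}$, and likewise each $\tilde d_{j}$, lies in $A$.

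The key tool is the conjugation automorphism attached to any normal element: for normal $a\in D$, the relation $aD=Da$ produces a ring automorphism $\phi_{a}$ with $ar=\phi_{a}(r)\,a$ for every $r\in D$, and $\phi_{a}$ sends units to units; a short computation also gives that any associate of a normal element is normal, so each $\tilde c_{i}$ and $\tilde d_{j}$ is itself normal. I would then argue by contradiction from the maximality of the facet $A$. Suppose some $\tilde c_{i} \notin A$. Commuting $c_{i}$ rightward past $c_{i+1},\dots,c_{s}$ via $c_{i}(c_{i+1}\cdots c_{s}) = \phi_{c_{i}}(c_{i+1}\cdots c_{s})\,c_{i}$ rewrites $x$ in the form $(\cdots)\,c_{i}w\,(\cdots)$, so $c_{i}w\mid x$. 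Writing $c_{i}=u\tilde c_{i}v$ with $u,v\in U(D)$ and commuting $v$ across the normal element $\tilde c_{i}$ — which only replaces $v$ by the unit $\phi_{\tilde c_{i}}(v)$ — converts $c_{i}w$ into $u'\tilde c_{i}w$ for some $u'\in U(D)$, giving $\tilde c_{i}w\mid x$. Hence $A\cup\{\tilde c_{i}\}$ is a face of $S(x)$ under the ordering $\tilde c_{i},a_{\sigma(1)},\dots,a_{\sigma(m)}$, strictly containing $A$ and contradicting the facet property. A mirror-image argument for $\tilde d_{j}$ — pushing $d_{j}$ leftward through $d_{1},\dots,d_{j-1}$ via its normality and then sliding the resulting unit past $\tilde d_{j}$ — produces $w\tilde d_{j}\mid x$ and the same contradiction.

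I expect the main obstacle to be noncommutative bookkeeping rather than anything conceptual: at each commutation one must track how units conjugated by $\phi_{c_{i}}$ and $\phi_{\tilde c_{i}}$ propagate, so that the rewritten expression is a genuine witness of divisibility in the sense the definition of a face requires. A pleasing feature of the approach is that the argument never uses normality of the $a_{i}$ themselves, only of the $c_{i}$ and $d_{j}$ supplied by n-atomicity, and so the hypothesis is precisely what the proof needs.
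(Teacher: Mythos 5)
Your proposal is correct and follows essentially the same route as the paper: factor the cofactors $c$ and $d$ into normal irreducibles by n-atomicity, then use normality to slide any atom not lying in $A$ next to $a_{\sigma(1)}\cdots a_{\sigma(m)}$, producing a face strictly larger than the facet $A$ --- a contradiction. Your write-up is in fact more explicit than the paper's about the conjugation automorphism $\phi_{a}$ attached to a normal element and about passing to coset representatives in $\overline{Irr}(D)$, details the paper's proof leaves implicit.
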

\begin{proof}
Since $A$ is a face of $S(x)$, it is clear from the definition of $S(x)$ that
\begin{equation*}
a_{\sigma (1)}a_{\sigma (2)}...a_{\sigma (m)}\mid x,
\end{equation*}
for some permutation $\sigma \in S_{m}$. Thus
\begin{equation*}
x=ba_{\sigma (1)}a_{\sigma (1)}...a_{\sigma (m)}c,
\end{equation*}
where $b,c\in D^{\ast }$. Since $D$ is n-atomic,%
\begin{equation*}
x=b_{1}b_{2}...b_{t}a_{\sigma (1)}a_{\sigma (1)}...a_{\sigma
(m)}c_{1}c_{2}...c_{l},
\end{equation*}
where $b_{i}(i=1,...,t)$ and $c_{j}(j=1,...,l)$ are normal irreducible elements. Now suppose that $b_{i}\notin \{a_{1},a_{2},...,a_{m}\}$ for some $i$ and using $b_{i}$ is normal, then we have 
\begin{equation*}
b_{i}a_{\sigma (1)}a_{\sigma (2)}...a_{\sigma (m)}\mid x.
\end{equation*}
Therefore, $\{b_{i},a_{1},a_{2},...,a_{m}\}$ is a face of $S(x)$ strictly larger than $A$, contradicting that $A$ is a facet of $S(x)$. Hence, $b_{i}\in \{a_{1},a_{2},...,a_{m}\}$ or $b_{i}$ is a unit for $i=1,...,t$. Similarly, $c_{j}\in \{a_{1},a_{2},...,a_{m}\}$ or $c_{j}$ is a unit for $j=1,...,l$. Therefore, $x$ has a factorization such that $\overline{Irr}(x)=A $.
\end{proof}
\bigskip \newline

The following example shows that the hypothesis of $D$ being normal in Proposition \ref{normal} cannot be dropped.

\begin{example}
Let $D=\mathbb{Q}\left\langle x,y\right\rangle $. The only nonassociate irreducible factorization of $f(x,y)=xy^{2}-xy^{2}x$ into irreducibles is $xy^{2}(1-x)$. Therefore, $S(f(x,y))$ is as in Figure \ref{fig:image5}.
\begin{figure}[H]
\begin{center}
 \includegraphics[width=0.5\linewidth, height=4cm]{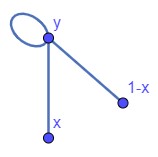}
\caption{$S(f(x,y))$ in $D=\mathbb{Q}\left\langle x,y\right\rangle $}
\label{fig:image5}   
\end{center}
\end{figure}
From Figure \ref{fig:image5}, we see that the facets are $A=\{x,y\}$ and $B=\{y,1-x\}$. However, neither A nor B is $\overline{Irr}(f(x,y))$ since $\overline{Irr}(f(x,y))=\{x,y,(1-x)\}$.\newline
\end{example}

\bigskip

We now produce the main result, which provides the necessary and sufficient condition for an n-atomic domain to be an n-UFD, and get a generalization of Theorem 3.9 in \cite{simplicial} to the noncommutative setting. In the next theorem, we may safely ignore all loops in both $S(X)$ and $G(X)$.\\

In the next theorem, we use the following condition.\\
\newline
 \textbf{ Condition (*)} If $a, r,$ and $r'$ are normal irreducible elements in $D$ such that $ar=r'a$. Then $r$ and $r'$ are associated.\\
\begin{theorem}
\label{1111}Let $D$ be an n-atomic domain.
\begin{enumerate}
\item If $D$ is an n-UFD, then for every $x\in D^{\#}$, $S(x)=(A,P(A))$ for some $A\subseteq \overline{Irr}(x)$, where $P(A)$ is the power set of $A$.
\item If $D$ satisfies Condition (*) and for every $x\in D^{\#}$, $S(x)=(A,P(A))$ for some $A\subseteq \overline{Irr}(x)$, then $D$ is an n-UFD.
\end{enumerate}
\end{theorem}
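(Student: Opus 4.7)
The plan is to use the swap property of adjacent normal irreducibles, which is essentially Condition (*), as the engine for both directions. For Part 1, fix $x \in D^{\#}$ with its (unique) n-atomic factorization $x = x_{1}x_{2}\cdots x_{m}$ and set $A = \{[x_{1}], \ldots, [x_{m}]\} \subseteq \overline{Irr}(x)$. I would first identify $V(S(x)) = A$: for any irreducible divisor $y \mid x$, writing $x = cyd$ and refining $c, d$ (and $y$ itself) into normal irreducibles produces an n-atomic factorization of $x$ containing (an associate of) $y$, and n-UFD uniqueness forces $[y] \in A$. The substantive step is that every subset of $A$ lies in $F(S(x))$; for this I derive a swap lemma as a consequence of n-UFD: given normal irreducibles $a, r$, the element $ar \in aD = Da$ can be written as $r'a$ for some $r' \in D$, and uniqueness of the n-atomic factorization of $ar$ forces $r'$ to be a normal irreducible with $r' \sim r$. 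Repeated swaps let me bubble any prescribed set of representatives of $\{[x_{i_{1}}], \ldots, [x_{i_{s}}]\}$ into consecutive positions at the front of $x_{1}\cdots x_{m}$, showing that some ordered product of representatives of the chosen classes divides $x$, which is exactly what faceness demands.

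For Part 2, assume Condition (*) and $S(x) = (A_{x}, P(A_{x}))$ for every $x \in D^{\#}$, and take two n-atomic factorizations $x = r_{1}\cdots r_{l} = t_{1}\cdots t_{m}$. I would establish a Part-2 version of the swap lemma first: for normal irreducibles $a, r$, write $ar = r'a$; the hypothesis applied to $y = ar$ has to force $r'$ to be normal irreducible (a nontrivial factorization of $r'$ would enlarge $V(S(ar))$ and realize an unwanted face, leading via domain cancellation to a unit factor), after which Condition (*) yields $r' \sim r$. Next I would prove the set-level matching $\{[r_{i}]\}_{i} = \{[t_{j}]\}_{j}$: if $[r_{1}]$ failed to appear among the $[t_{j}]$, the hypothesis places $\{[r_{1}]\} \cup \{[t_{j}]\}_{j}$ in $F(S(x))$, giving a divisor of $x = t_{1}\cdots t_{m}$ that, after swap-alignment, produces a unit-equals-nonunit contradiction. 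With set matching in hand, I would align an associate of $r_{1}$ at the front of the $t$-product by successive swaps, cancel $r_{1}$ from the left (absorbing residual units using normality of $r_{1}$), and induct on $\min(l, m)$ to conclude $l = m$ and that the multisets of associate classes coincide.

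The main obstacle I anticipate is the set-matching step in Part 2: once loops are discarded, the simplicial complex retains only the joint-divisibility pattern of distinct associate classes, so the only leverage against a hypothetically extraneous class $[r_{1}]$ is the faceness of $\{[r_{1}], [t_{j_{1}}], \ldots, [t_{j_{p}}]\}$, and turning this into a contradiction with $x = t_{1}\cdots t_{m}$ requires both the swap lemma and careful bookkeeping of unit absorption across normal elements. A secondary difficulty is the swap lemma of Part 2 itself: unlike in Part 1, one cannot invoke n-UFD uniqueness to guarantee a priori that the element $r'$ in $ar = r'a$ is normal irreducible, so this must be bootstrapped out of the simplicial hypothesis applied to the short product $ar$ before Condition (*) can be brought to bear.
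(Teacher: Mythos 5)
Your Part~1 is essentially the paper's argument: write the unique n-atomic factorization, use normality of the factors to see that any ordered subproduct of the distinct irreducible classes divides $x$, and conclude $F(S(x))=P(\overline{Irr}(x))$. One simplification worth noting: the ``swap lemma'' you want does not need n-UFD uniqueness (nor, in Part~2, the simplicial hypothesis). If $a$ is normal irreducible and $r$ is irreducible, then $ar\in aD=Da$ gives $ar=r'a$; a nontrivial factorization $r'=yz$ would give $ar=yza=ay'z'$ by normality of $a$, hence $r=y'z'$ after left cancellation, with $y',z'$ nonunits (a unit $y'$ would exhibit $a$ as a product of two nonunits), contradicting irreducibility of $r$. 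So $r'$ is automatically a nonunit normal irreducible, and Condition~(*) then gives $r'\sim r$. This disposes of the ``secondary difficulty'' you flag for Part~2.

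For Part~2, however, your route has a genuine gap at the set-matching step, and it is precisely the step you yourself identify as the main obstacle. Faceness of $\{[r_1]\}\cup\{[t_j]\}_j$ only yields $x=c\,w_1w_2\cdots w_{k+1}\,d$ for some ordered product of class representatives and some $c,d\in D^{\ast}$; after bubbling $r_1$ to the left you obtain $x\in r_1D$, and comparing this with $x=t_1\cdots t_m$ to get your ``unit-equals-nonunit contradiction'' requires concluding that $r_1$ is associate to some $t_j$ --- which is exactly the primality/uniqueness statement you are trying to prove, so the step is circular. Without it, the subsequent induction on $\min(l,m)$ cannot start. The paper avoids all of this: by Remark~\ref{hhh}, $SK_1(S(x))=G(x)$, and $SK_1((A,P(A)))$ is complete, so $G(x)$ is complete for every $x\in D^{\#}$ and \cite[Theorem 15]{paperphd1} (n-atomic plus Condition~(*) plus completeness of every $G(x)$ implies n-UFD) finishes the proof. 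If you insist on a self-contained argument, you must restructure Part~2 as a minimal-counterexample argument in the style of Theorem~\ref{main t}: pick $y$ with two distinct n-atomic factorizations and minimal length $m$, show every $\pi_i$ is nonassociate to every $\alpha_j$ (otherwise cancel the common normal factor, using Condition~(*) to keep the shorter factorizations distinct, contradicting minimality), and then use a single edge $\pi_k\alpha_l\mid y$ supplied by the complete $1$-skeleton to cancel $\pi_k$ and contradict minimality again. Set matching is then a consequence, not a prerequisite.
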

\begin{proof}
\begin{enumerate}
    \item Let $D$ be an n-UFD, and $x\in D^{\#}$ factors uniquely as 
    \begin{equation*} x=a_{1}^{m_{1}}a_{2}^{m_{2}}...a_{l}^{m_{l}}, \end{equation*}
    where $m_{1},m_{2},...,m_{l}\in \mathbb{N}$ and $a_{1},a_{2},...,a_{l}$ are normal irreducible elements (not necessarily distinct). Then $a_{i_{1}}a_{i_{2}}...a_{i_{t}}\mid x$ $($with $i_{1}<i_{2}<...<i_{t})$ for any subset $\{a_{i_{1}},a_{i_{2}},...,a_{i_{t}}\}\subseteq \overline{Irr}(x)$. Hence $F(S(x))=P(\overline{Irr}(x))$, and $S(x)=(\overline{Irr}(x),P(\overline{Irr}(x))$.
    \item Now let $D$ satisfies Condition (*) and $x\in D^{\#}$, $S(x)=(A,P(A))$ for some $A\subseteq \overline{Irr}(x) $. Since $G(x)=SK_{1}(S(x))$ by Remark \ref{hhh} and $SK_{1}((A,P(A)))$ is a complete graph, $G(x)$ is complete. Consequently, $D$ is an n-UFD by \cite[Theorem 15]{paperphd1}.
\end{enumerate}
\end{proof}

\bigskip

We now examine another necessary and sufficient condition for an n-atomic domain to be an n-UFD. 

\bigskip

The following proposition introduces an equivalent definition of the\ n-UFD.

\begin{proposition}
\label{primeirreducible}An n-atomic domain $D$ is an n-UFD if and only if every normal irreducible element of $D$ is prime.
\end{proposition}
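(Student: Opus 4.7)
The plan is to prove both implications by adapting the classical atomic-domain argument (UFD iff every atom is prime) to the normal noncommutative setting, with normality of primes compensating for the loss of commutativity.

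For the forward direction, I would assume $D$ is an n-UFD and let $p$ be a normal irreducible with $p\mid ab$, so that $ab=cpd$ for some $c,d\in D$. Using n-atomicity, I factor $a,b,c,d$ into normal irreducibles (absorbing any stray unit into an adjacent atom, which preserves normal irreducibility), thereby rewriting $ab=cpd$ as an equality of two normal-irreducible factorizations of the same element. The uniqueness clause in the n-UFD definition then forces $p$ to be associated with some atom appearing in the factorization of $a$ or of $b$; since $p\sim a_i$ implies $p\mid a_i\mid a$, transitivity of two-sided divisibility yields $p\mid a$ (and analogously $p\mid b$ in the other case).

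For the converse, I would assume every normal irreducible is prime and prove the uniqueness clause by induction on the length $l$ of a factorization $x=r_1\cdots r_l=t_1\cdots t_m$. The base $l=1$ is immediate: $r_1$ being irreducible together with the fact that a product of nonunits is a nonunit in a domain (since one-sided invertibility forces unit-hood) rules out $m\ge 2$. For the inductive step, $r_1$ is prime by hypothesis, so iterating the prime condition on $r_1\mid t_1(t_2\cdots t_m)$ gives $r_1\mid t_j$ for some $j$; writing $t_j=ur_1v$, the same irreducibility-plus-domain argument forces both $u$ and $v$ to be units, so $t_j\sim r_1$. I then exploit normality of $r_1$ to define the ring automorphism $\rho\colon D\to D$ by $dr_1=r_1\rho(d)$ (which preserves normal irreducibles and units since it is an automorphism), substitute $t_j=ur_1v$, and push $r_1$ to the front to obtain
\begin{equation*}
r_1 r_2\cdots r_l = r_1\,\rho(t_1)\cdots\rho(t_{j-1})\,\rho(u)v\,t_{j+1}\cdots t_m.
\end{equation*}
Cancelling $r_1$ and absorbing $\rho(u)v$ into an adjacent factor produces a normal-irreducible factorization of $r_2\cdots r_l$ of length $m-1$, to which the inductive hypothesis applies.

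The main obstacle is ensuring that each $\rho(t_k)$ appearing in the shorter factorization is associated to the original $t_k$, so that the inductive matching descends to a bijection between $\{r_i\}_{i\ge 2}$ and $\{t_k\}_{k\ne j}$ rather than to the $\rho$-images. I would resolve this by applying the primeness of $t_k$ to $t_k\mid t_k r_1=r_1\rho(t_k)$: either $t_k\mid r_1$, in which case $r_1\sim t_k$ by irreducibility and a short unit-tracking calculation using normality of $t_k$ gives $\rho(t_k)\sim t_k$, or $t_k\mid\rho(t_k)$, in which case $\rho(t_k)\sim t_k$ follows immediately from irreducibility of $\rho(t_k)$ (preserved by $\rho$). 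Combined with $t_j\sim r_1$, this assembles the desired permutation matching the $r_i$'s to the $t_k$'s up to associates.
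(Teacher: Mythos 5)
Your proof is correct, and the forward direction is the same as the paper's: factor both sides of $ab=cpd$ into normal irreducibles and let the uniqueness clause of the n-UFD definition match $p$ with an atom of $a$ or of $b$. In the converse the underlying mechanism is also the same (primeness plus irreducibility gives $r_1\sim t_j$; normality lets you slide the atom to one end, cancel, and induct on length), but you organize it differently and, in one respect, more carefully. The paper first deduces, for each $i$ separately, that $x_i$ is associated to some $y_{\sigma(i)}$, and then runs a separate induction that only tracks the lengths $p=m$; when it cancels the normal atom it conjugates the remaining factors into $y_k'$ and cites an external lemma for their normal irreducibility, without needing (or checking) that $y_k'\sim y_k$. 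You instead establish length and the permutation simultaneously in a single induction, which forces you to confront exactly that point: after pushing $r_1$ to the front, the right-hand factors become $\rho(t_k)$, and the inductive matching is to the $\rho$-images rather than to the $t_k$ themselves. Your resolution --- apply primeness of $t_k$ to $t_k\mid t_kr_1=r_1\rho(t_k)$ to get $t_k\mid r_1$ or $t_k\mid\rho(t_k)$, and in either case conclude $\rho(t_k)\sim t_k$ --- is a self-contained fix, and it also delivers the injectivity of the matching, which the paper's phrase ``for a permutation $\sigma\in S_m$'' asserts without argument. Both routes prove the same statement; yours is slightly longer but closes the bookkeeping gaps, while the paper's is shorter at the cost of leaving the permutation claim and the conjugation step implicit.
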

\begin{proof}
Suppose that $D$ is an n-UFD, $r$ is a normal irreducible element in $D$, and $r\mid ba$ for some $a,b\in D^{\ast }$. Then 
\begin{equation}
ba=x^{^{\prime }}rx,  \label{equ}
\end{equation}
where $x^{^{\prime }},x\in D^{\ast }$. Since $D$ is an n-UFD, it follows that $b=b_{1}b_{2}\dots b_{p}$, $a=a_{1}a_{2}\dots a_{m}$, $x^{^{\prime}}=x_{1}^{^{\prime }}x_{2}^{^{\prime }}\dots x_{l}^{^{\prime }}$ and $x=x_{1}x_{2}\dots x_{t}$, where $b_{i},a_{j},x_{k},x_{q}^{^{\prime }},r\ (i=1,\dots ,p,$ $j=1,\dots ,m,$ $k=1,\dots ,t,$ and $q=1,\dots ,l)$ are normal irreducible elements. Replacing in equation (\ref{equ}),
\begin{equation*}
b_{1}b_{2}\dots b_{p}a_{1}a_{2}\dots a_{m}=x_{1}^{^{\prime }}x_{2}^{^{\prime}}\dots x_{l}^{^{\prime }}rx_{1}x_{2}\dots x_{t},
\end{equation*}
then $r$ is associated with some $b_{i}\ (i=1,\dots ,p)$ or with some $a_{j}\ (j=1,\dots ,m)$. Therefore, $r$ divides $b$ or $r$ divides $a$. Thus, $r$ is prime.\newline
Now let every normal irreducible element be prime and $x\in D^{\#}$ such that
\begin{equation}
x=x_{1}x_{2}x_{3}...x_{p}=y_{1}y_{2}y_{3}...y_{m},  \label{eq 2}
\end{equation}
where $x_{i}(i=1,\dots ,p)$ and $y_{j}(j=1,\dots ,m)$ are normal irreducible elements. Then $x_{i}\mid y_{1}y_{2}y_{3}...y_{m}$. Since $x_{i}$ is prime for every $i=1,...,p$, it follows that $x_{i}\mid y_{j}$ for some $j=1,\dots,m$, thus $y_{j}=tx_{i}t^{^{\prime }}$, where $t,t^{^{\prime }}\in D^{\ast }$. Since $x_{i}$ and $y_{j}$ are irreducible elements, we have $t,t^{^{\prime}}\in U(D)$. Therefore, $x_{i}$ $(i=1,\dots ,p)$ is associated with $y_{\sigma (i)}$, for a permutation $\sigma \in S_{m}$. Now we must prove that $p=m$, which we do by induction on $p$. If $p=1,$ then $x_{1}=y_{1}y_{2}y_{3}...y_{m}$. If $m>1,$ without loss of generality, we may assume that $m=2$ , then $x_{1}=y_{1}y_{2}$. Since $x_{1}$ is irreducible, we have $y_{1}$ or $y_{2}$ as units. This contradicts the irreducibility of $y_{1}$ and $y_{2}$, and so $m=1$ when $p=1$. Now assume that $p>1$ and equal length hold for equations of the form (\ref{eq 2}) with fewer than $p$ normal irreducibles on the left-hand side. Now let $x_{1}x_{2}x_{3}...x_{p}=y_{1}y_{2}y_{3}...y_{m}$. Then $x_{p}$ is associated with $y_{j}$ for some $j=1,\dots ,m$. Thus $y_{j}=ux_{p}v$ where $u,v\in U(D) $. So we can write equation (\ref{eq 2}) as
\begin{equation}
x_{1}x_{2}x_{3}...x_{p}=y_{1}y_{2}y_{3}...y_{j-1}ux_{p}vy_{j+1}...y_{m}.
\end{equation}
Using normality and cancelling, we have
\begin{equation}
x_{1}x_{2}x_{3}...x_{p-1}=y_{1}y_{2}y_{3}...y_{j-1}uv^{\prime }y_{j+1}^{\prime }...y_{m}^{\prime },
\end{equation}
\newline
where $v^{\prime }\in U(D)$ and $y_{j+1}^{\prime },y_{j+2}^{\prime},...,y_{m}^{\prime }$ are normal irreducible elements by \cite[Lemma 3]{paperphd1}. Since equal length holds for equations of the form (\ref{eq 2}) with fewer than $p$ normal irreducibles on the left-hand side, $p-1=m-1$. Therefore, $p=m$.
\end{proof}
\begin{corollary}
\cite[Theorem 16.1.12]{equivalentdef} A commutative atomic domain $D$ is an UFD if and only if every irreducible element of $D$ is prime.
\end{corollary}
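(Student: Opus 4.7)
The plan is to obtain this corollary as an immediate specialization of Proposition \ref{primeirreducible} to the commutative setting, rather than reproving it from scratch. First I would observe that when $D$ is commutative, the defining condition $aD=Da$ of normality holds trivially for every $a\in D$, so $D$ is automatically a normal domain. Consequently the notions coincide: "atomic" equals "n-atomic", "irreducible" equals "normal irreducible", and by the definition in \cite{natomic} reproduced in the preliminaries, "UFD" equals "n-UFD".

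With these identifications in hand, I would invoke Proposition \ref{primeirreducible} directly. It states that an n-atomic domain $D$ is an n-UFD if and only if every normal irreducible element of $D$ is prime. In the commutative case this reads: an atomic domain $D$ is a UFD if and only if every irreducible element of $D$ is prime, which is exactly the statement of the corollary.

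I expect no real obstacle here. The only minor point worth noting is that one should confirm that Condition (*) (which appears in Theorem \ref{1111} but, importantly, not in Proposition \ref{primeirreducible}) is not needed; indeed, Proposition \ref{primeirreducible} was stated and proved without that hypothesis, so the reduction is clean. Thus the corollary requires no new argument beyond pointing out the specialization.
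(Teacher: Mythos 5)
Your proposal is correct and matches the paper's intent: the corollary is stated as an immediate specialization of Proposition \ref{primeirreducible}, using the fact that in a commutative domain every element is normal, so n-atomic, normal irreducible, and n-UFD reduce to atomic, irreducible, and UFD respectively. Your remark that Condition (*) plays no role here is also accurate.
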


Recall that for two simplicial complexes $S=(V,F)$ and $T=(W,G)$, their join $S\ast T$ is the simplicial complex with vertex set $V\cup W$ and with face set $\{A\cup B:A\in F,B\in G\}$.

\begin{lemma}
\label{222}Let $D$ be an n-atomic  domain and $a,b\in D^{\#}$. Then $V(S(a))\cup V(S(b))\subseteq V(S(ab))$. Moreover, if $D$ is an n-UFD, then equality holds.
\end{lemma}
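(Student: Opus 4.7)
The plan is to prove the inclusion directly from the factorization definition of divisibility, and then derive the reverse inclusion in the n-UFD case from Proposition \ref{primeirreducible}. For the forward inclusion, I would take $y \in V(S(a)) \cup V(S(b))$, so $y \in \overline{Irr}(D)$ with $y \mid a$ or $y \mid b$. If $y \mid a$, write $a = cyd$ with $c,d \in D$; then $ab = cy(db)$ exhibits $y$ as a divisor of $ab$, so $y \in V(S(ab))$. The case $y \mid b$ is symmetric: writing $b = cyd$ gives $ab = (ac)yd$. Hence $V(S(a)) \cup V(S(b)) \subseteq V(S(ab))$.

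For the reverse inclusion under the n-UFD hypothesis, let $y \in V(S(ab))$, so $y \in \overline{Irr}(D)$ is an irreducible divisor of $ab$. The first step is to upgrade $y$ from merely irreducible to \emph{normal} irreducible: since $D$ is n-atomic, $y$ itself admits a factorization $y = r_{1} r_{2} \cdots r_{l}$ into normal irreducibles; since each $r_{i}$ is a nonunit and $y$ is irreducible, we must have $l = 1$, forcing $y = r_{1}$ to be normal. Proposition \ref{primeirreducible} then guarantees that $y$ is prime, so $y \mid ab$ yields $y \mid a$ or $y \mid b$, and in either case $y \in V(S(a)) \cup V(S(b))$.

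The only genuinely substantive step is the reduction from ``irreducible'' to ``normal irreducible'', which is what licenses the application of Proposition \ref{primeirreducible}; everything else is routine unpacking of the definition of $V(S(\cdot))$ and of the divisibility relation $\mid$. In particular, no appeal to the simplicial structure of $S(ab)$ (only its vertex set) is needed.
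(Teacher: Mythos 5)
Your proof is correct and follows essentially the same route as the paper: a direct verification of the forward inclusion from the definition of divisibility, and an appeal to Proposition \ref{primeirreducible} (normal irreducibles are prime in an n-UFD) for the reverse inclusion. Your intermediate step showing that an irreducible divisor of $ab$ in an n-atomic domain is automatically \emph{normal} irreducible (by factoring it into normal irreducibles and noting the factorization must have length one) is a justification the paper's proof asserts without argument, and it is a worthwhile addition.
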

\begin{proof}
Let $x\in V(S(a))\cup V(S(b))$. Then $x\in V(S(a))$ or $x\in V(S(b))$ and $x\mid a$ or $x\mid b$. In either case, $x\mid ab$ and $x\in V(S(ab))$. Now suppose that $D$ is an n-UFD and $x\in V(S(ab))$, then $x\mid ab$ since $x$ is normal irreducible and hence prime by Proposition \ref{primeirreducible}. Hence $x\mid a$ , $x\in V(S(a))$ or $x\mid b$ , $x\in V(S(b))$. Thus $x\in V(S(a))\cup V(S(b))$.
\end{proof}

\bigskip

We now give another characterization of n-unique factorization domains via irreducible simplicial complexes. The following theorem extends Theorem 3.12 in \cite{simplicial} to the noncommutative domain.
\begin{theorem}
Let $D$ be an n-atomic domain. Then the following are equivalent:
\begin{enumerate}
\item $D$ is an n-UFD.
\item $S(a)\ast S(b)=S(ab)$ for every $a,b\in D^{\#}$.
\end{enumerate}
\end{theorem}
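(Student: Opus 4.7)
I would prove the two implications separately, noting that $(2)\Rightarrow(1)$ needs only the vertex-set equality of $S(ab)$ and $S(a)\ast S(b)$, not the full equality of their face sets.

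For $(1)\Rightarrow(2)$, the strategy is to apply Theorem~\ref{1111}(1) three times, to $a$, $b$, and $ab$, so that each of $S(a)$, $S(b)$, $S(ab)$ is a full simplex of the form $(V,P(V))$; then Lemma~\ref{222} gives the vertex-set equality $V(S(ab))=V(S(a))\cup V(S(b))$. Since by definition $S(a)\ast S(b)$ already has vertex set $V(S(a))\cup V(S(b))$, only the face sets need to be matched. Unwinding the definition of the join,
\[
F(S(a)\ast S(b)) = \{A\cup B : A\subseteq V(S(a)),\, B\subseteq V(S(b))\},
\]
which equals $P(V(S(a))\cup V(S(b)))$: for any $C\subseteq V(S(a))\cup V(S(b))$, one has $C=A\cup B$ with $A=C\cap V(S(a))$ and $B=C\setminus V(S(a))\subseteq V(S(b))$. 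By Theorem~\ref{1111}(1) applied to $ab$, this is $P(V(S(ab)))=F(S(ab))$, and the two complexes agree.

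For $(2)\Rightarrow(1)$, I would invoke Proposition~\ref{primeirreducible} and show that every normal irreducible of $D$ is prime. Given a normal irreducible $r$ with $r\mid ab$ for some $a,b\in D^{\ast}$, the case where $a$ or $b$ is a unit is immediate, so I may assume $a,b\in D^{\#}$. Replacing $r$ by its chosen representative in $\overline{Irr}(D)$ (which does not affect divisibility, since associates divide the same elements), we have $r\in V(S(ab))$. By hypothesis $S(ab)=S(a)\ast S(b)$, so $V(S(ab))=V(S(a))\cup V(S(b))$, forcing $r\mid a$ or $r\mid b$. Hence $r$ is prime, and Proposition~\ref{primeirreducible} yields that $D$ is an n-UFD.

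The main obstacle, modest as it is, lies in $(1)\Rightarrow(2)$: one must resist the temptation to reprove the face-set equality by explicit reordering of normal atoms (which would be needed to handle mixed orderings of divisors of $a$ and divisors of $b$ from scratch) and instead observe that Theorem~\ref{1111}(1) has already collapsed the complexes to full simplices, reducing the question to the set-theoretic identity about power sets of unions noted above. The converse $(2)\Rightarrow(1)$ is then surprisingly cheap, since it uses only the vertex-level consequence of the join equality, not the face structure.
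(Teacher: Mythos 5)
Your proposal is correct and follows essentially the same route as the paper: $(1)\Rightarrow(2)$ via Theorem \ref{1111} and Lemma \ref{222} to reduce everything to power sets of vertex sets, and $(2)\Rightarrow(1)$ via Proposition \ref{primeirreducible} using only the vertex-level consequence of the join equality (the paper phrases this direction contrapositively, but the argument is the same). Your explicit check that the join of two full simplices is the full simplex on the union of their vertex sets is a small detail the paper elides.
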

\begin{proof}
Let $D$ be an n-UFD and $a,b\in D^{\#}$. From Lemma \ref{222}, we get $V(S(a)\ast S(b))=V(S(ab))$. From Theorem \ref{1111}, $F(S(x))=P(V(S(x)))$ for any $x\in D^{\#}$ and from Lemma \ref{222}, $V(S(ab))=V(S(a))\cup V(S(b))$. So we have $F(S(ab))=P(V(S(ab)))=P(V(S(a))\cup V(S(b)))=P(V(S(a)\ast S(b)))=F(S(a)\ast S(b))$.\newline
Now let $D$ be not an n-UFD. Using Proposition \ref{primeirreducible}, there exists a normal irreducible element $y\in D^{\#}$ that is not prime. Hence there exist $a,b\in D^{\#}$ such that $y\mid ab$, whereas $y\nmid a$ and $y\nmid b$. Thus $y\in V(S(ab))$ but\ $y\notin V(S(a))\cup V(S(b))=V(S(a)\ast S(b))$, then $S(a)\ast S(b)\neq S(ab)$.
\end{proof}

\section{Generalized Irreducible Divisor Graphs}
The goal of this section is to introduce the notions of a noncommutative $\tau $-factorization and a $\tau $-irreducible divisor graph in a noncommutative domain $D$. More precisely, we will define the directed and undirected $\tau $-irreducible divisor graphs for nonzero nonunit elements in a noncommutative domain, and we will find an equivalent characterization of an n-UFD.\newline
We begin with some definitions for a noncommutative $\tau $-factorization.
\subsection{$\protect\tau $- Factorization Definitions}
Let $D$ be a domain with a symmetric relation $\tau $ on $D^{\#}$. D. D. Anderson and A. M. Frazier in \cite{tauirreducible} introduced the following definitions for a $\tau $-factorization of an element $a\in D^{\#}$ in the commutative case, and we will use the same definitions in the noncommutative case.
\begin{Defns}
\begin{enumerate}
\item A factorization of $a\in D^{\#}$, $a=\lambda a_{1}a_{2}...a_{m}$ is called a $\tau $-factorization if $a_{i}\in D^{\#}$, $\lambda \in U(D)$ and $a_{i}\tau a_{j}$ for every $i,j=1,...,m$. If $m=1$, then this is called a trivial $\tau $-factorization, each $a_{i}$ is called a $\tau $-factor, or $a_{i}$ $\tau $-divides $a$, written $a_{i}\mid _{\tau }a$.
\item A relation $\tau $ is said to be associate preserving if for $a,b,b^{^{\prime }}\in D^{\#}$, with $a\tau b$ and $b\sim b^{^{\prime }}$ imply $a\tau b^{^{\prime }}$.
\end{enumerate}
\end{Defns}

\begin{examples}
\begin{enumerate}
\item Let $D$ be a domain, and $\tau =D^{\#}\times D^{\#}$. Then the usual factorization (resp. usual divides) is the same as the $\tau $-factorization (resp. $\tau $-divides).
\item Let $D$ be a domain, and $\tau =\varnothing $. For every $a\in D^{\#}$, there is only trivial factorization. Furthermore, all $\tau $-divisors of $a$ are associated with $a$.
\item Let $D$ be a domain, and $S$ be a non-empty subset of $D^{\#}$. Define $a\tau b$ if and only if $a,b\in S$, i.e. $\tau =S\times S$. A non-trivial $\tau $-factorization is a factorization into elements from $S$. For example, if $S$ is the set of primes (resp. irreducibles), then the $\tau $-factorization is a prime decomposition (resp. an atomic factorization).
\end{enumerate}
\end{examples}
We now define the noncommutative $\tau $-irreducible, $\tau $-ascending chain condition on the principal right (left) ideals, and $\tau $-unique factorization domain.
\begin{Defns}
\begin{enumerate}
\item Let $x\in D^{\#}$. We say that $x$ is a $\tau $-irreducible or a $\tau 
$-atom if the factorization of the form $x=\lambda (\lambda ^{-1}xv)v^{-1}$is the only $\tau $-factorizations of $x$.
\item We say that a domain $D$ satisfies the $\tau $-ascending chain condition on the principal right (left) ideals ($\tau $-ACCPr) if for every chain $\left\langle a_{0}\right\rangle \subseteq \left\langle a_{1}\right\rangle \subseteq $\textperiodcentered \textperiodcentered \textperiodcentered $\subseteq \left\langle a_{i}\right\rangle \subseteq $\textperiodcentered \textperiodcentered \textperiodcentered\ with $a_{i+1}|_{\tau }a_{i}$, there exists an $n\in \mathbb{N}$ such that $\left\langle a_{n}\right\rangle =\left\langle a_{j}\right\rangle $ for all $j>n$.
\item A domain $D$ is called $\tau $-atomic (resp. $\tau $-normal atomic, for short $\tau $-n-atomic) if for every $r\in D^{\#}$ there exist $\tau $-irreducible (resp. normal $\tau $-irreducible) elements $r_{1},r_{2},\dots ,r_{l}$ such that $r=r_{1}r_{2}\dots r_{l}$.
\item A domain $D$ is called a $\tau $-finite factorization domain, for short $\tau $-FFD (resp. $\tau $-normal finite factorization domain, for short $\tau $-n-FFD), if $:$\\
    (1) the domain $D$ is $\tau $-atomic (resp. $\tau $-n-atomic), and\\ 
    (2) if $r\in D^{\#}$ has only finitely many distinct nonassociate $\tau $-irreducible (resp. normal $\tau $-irreducible) divisors.
\item A domain $D$ is called a $\tau $-unique factorization domain, for short $\tau $-UFD (resp. $\tau $-normal unique factorization domain, for short $\tau $-n-UFD), if $:$\\ 
    (1) the domain $D$ is $\tau $-atomic (resp. $\tau $-n-atomic), and\\ 
    (2) if $r\in D^{\#}$ has two $\tau $-atomic (resp. $\tau $-n-atomic) factorizations, i.e. $r=r_{1}r_{2}\dots .r_{l}=t_{1}t_{2}\dots .t_{m}$ \ where $r_{1},r_{2},\dots,r_{l},t_{1},t_{2},\dots ,t_{m}$ are $\tau $-irreducible (resp. normal $\tau $-irreducible) elements, then $l=m$ and there is a permutation $\sigma \in S_{l}$ such that for each $i=1,\dots ,l,$ $r_{i}$ is associated with $t_{\sigma (i)}$.
\end{enumerate}
\end{Defns}

\subsection{$\protect\tau $-Irreducible Divisor Graphs}
Let $D$ be a domain with a symmetric and associate preserving relation $\tau $ on $D^{\#}$. Now, we introduce the notions of directed and undirected $\tau $-irreducible divisor graphs for any $x\in D^{\#}$ in a noncommutative
domain $D$.
\begin{definition}
\label{Def t}Let $D$ be a $\tau $-atomic domain with a symmetric and associate preserving relation $\tau $ on $D^{\#}$and $x\in D^{\#}$. The directed (resp. undirected) $\tau $-irreducible divisor graph of $x$, denoted by $\Gamma _{\tau }(x)$ (resp. $G_{\tau }(x)$), is the graph $(V,E)$ with the set of vertices $V=\left\{ y\in \overline{Irr_{\tau }}(D):y\mid_{\tau }x\right\} $, and the set of edges $E=\{(y_{1},y_{2}):y_{1}y_{2}\mid _{\tau }x\}$ (resp. $E=\{\{y_{1},y_{2}\}:y_{1}y_{2}\mid _{\tau }x \ or \ y_{2}y_{1}\mid _{\tau }x\}$).\newline
Further, we attach $n-1$ loops to the vertex $y$ if $y^{n}\mid _{\tau }x$ and $y^{n+1}\nmid _{\tau }x$.\newline
The $\tau $-reduced directed (resp.\ undirected) divisor graph of $x$\ in $D^{\#}$ is the subgraph of $\Gamma _{\tau }(x)$ (resp. $G_{\tau }(x)$) containing no loops and denoted by $\overline{\Gamma _{\tau }(x)}$ (resp. $\overline{G_{\tau }(x)}$).
\end{definition}
Note that the definition of the undirected $\tau $-irreducible divisor graphs of $x\in D^{\#}$ coincides with the definition of the commutative $\tau $-irreducible divisor graphs of $x\in D^{\#}$ in \cite{generalirreducible}.

\begin{examples}
\begin{enumerate}
\item Let $D$ be a domain and $\tau =\varnothing $. In this case, every nonzero nonunit is $\tau $-irreducible. This means for every $x\in D^{\#}$, $\Gamma _{\tau }(x)=G_{\tau }(x)=(\{x\},\varnothing )$.
\item Let $D$ be a domain and $\tau =D^{\#}\times D^{\#}$. In this case, every $\tau $-factorization is a usual factorization and conversely. Moreover, an element $x\in D^{\#}$ is $\tau $-irreducible if and only if $x$ is irreducible. Hence we have $G_{\tau }(x)=G(x)$, and $\Gamma _{\tau }(x)=\Gamma (x)$.
\item Let $D=\mathbb{Q}\left\langle x,y\right\rangle $ and the relation $\tau $ defined by $h(x,y)\tau g(x,y)$ if and only if $\deg (h(x,y))=\deg (g(x,y))$. Consider the element $f(x,y)=x^{4}-xyx+5x^{2}$. The only factorization of $f(x,y)$ into nonassociate irreducibles is $x(x^{2}-y+5)x$, and it is\ not a $\tau $-factorization, so $f(x,y)$ is a $\tau $-irreducible element. Therefore, $G(f(x,y))$ is as in Figure \ref{fig:subim14}, $\Gamma(f(x,y))$ is as in Figure \ref{fig:subim13}, and $G_{\tau }(f(x,y))=\Gamma_{\tau }(f(x,y))$ is as in Figure \ref{fig:subim15}.
\begin{figure}[H]
\begin{subfigure}{0.3\textwidth}
\includegraphics[width=0.9\linewidth, height=4cm]{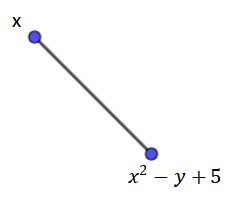} 
\caption{$G(h(x,y))$}
\label{fig:subim14}
\end{subfigure}
\begin{subfigure}{0.3\textwidth}
\includegraphics[width=0.9\linewidth, height=4cm]{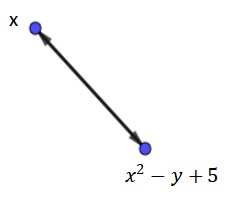} 
\caption{$\Gamma(f(x,y))$}
\label{fig:subim13}
\end{subfigure}
\begin{subfigure}{0.3\textwidth}
\includegraphics[width=0.9\linewidth, height=4cm]{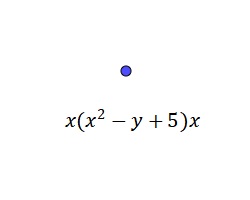}
\caption{$G_{\tau }(f(x,y))=\Gamma_{\tau }(f(x,y))$}
\label{fig:subim15}
\end{subfigure}

\caption{$\Gamma(f(x,y))$, $G(f(x,y))$, and $G_{\tau }(f(x,y))$ in $\mathbb{Q}\left\langle x,y\right\rangle $}
\label{fig:image3}
\end{figure}

\item Let $D=K[x,y]\diagup \left\langle xy-yx-1\right\rangle $ be the first weyl algebra over a field $k$ and the relation $\tau $ defined by $h(x,y)\tau g(x,y)$ if and only if $\deg (h(x,y))=\deg (g(x,y))$. Let $f(x,y)=xy+xy^{2}$. The only factorizations of $f(x,y)$ into nonassociate irreducibles are $xy(1+y)$ and $x(1+y)y$, both are $\tau $-factorizations. Therefore, $G(f(x,y))$ and $G_{\tau }(f(x,y))$ are as in Figure \ref{fig:subim9} whereas $\Gamma (f(x,y))$ and $\Gamma _{\tau }(f(x,y))$ are as in Figure \ref{fig:subim10}. 
\begin{figure}[H]
\begin{subfigure}{0.5\textwidth}
\includegraphics[width=0.9\linewidth, height=4cm]{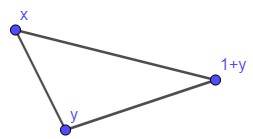} 
\caption{$G(f(x,y))=G_{\tau }(f(x,y))$}
\label{fig:subim9}
\end{subfigure}
\begin{subfigure}{0.5\textwidth}
\includegraphics[width=0.9\linewidth, height=4cm]{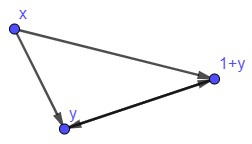}
\caption{$\Gamma (f(x,y))=\Gamma _{\tau }(f(x,y))$}
\label{fig:subim10}
\end{subfigure}
\caption{$G_{\tau }(f(x,y))$ and $\Gamma _{\tau }(f(x,y))$ in $K[x,y]\diagup \left\langle xy-yx-1\right\rangle $ }
\label{fig:image3}
\end{figure}

\end{enumerate}
\end{examples}

\subsection{Results}
In this subsection, we extend the results in \cite{generalirreducible} to both the digraph and undirected graphs of $x\in D^{\#}$ in a noncommutative domain $D$.

\begin{proposition}
Let $D$ be a domain with a symmetric and associate preserving relation $\tau $ on $D^{\#}$. If $D$ is $\tau $-atomic, then a nonunit $x\in D^{\#}$ is $\tau $-irreducible if and only if $G_{\tau }(x)=\Gamma _{\tau}(x)=(\{x\},\varnothing )$, is a single vertex (with no loop).
\end{proposition}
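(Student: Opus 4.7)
My approach will be to unwind the definitions directly, handling the two implications separately and using only that vertices must be $\tau$-irreducible divisors of $x$ and that edges/loops correspond to non-trivial pieces of a $\tau$-factorization of $x$.

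For the forward implication, I would start from the assumption that $x$ is $\tau$-irreducible. Then the only $\tau$-factorizations of $x$ have length one, i.e. have the form $x=\lambda(\lambda^{-1}xv)v^{-1}$. Consequently every $\tau$-divisor of $x$ is an associate of $x$, so the representative of $[x]$ is the unique element of $\overline{Irr_{\tau}}(D)$ that $\tau$-divides $x$; identifying $x$ with this representative, $V=\{x\}$. To show $E=\varnothing$, I would observe that the only candidate edge on $V$ is $(x,x)$, which by Definition \ref{Def t} requires $x\cdot x\mid_{\tau}x$, i.e. a $\tau$-factorization of $x$ of length at least two. This contradicts $\tau$-irreducibility. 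The same reasoning rules out loops, because a loop at $x$ requires $x^{n}\mid_{\tau}x$ with $n\geq 2$, again a non-trivial $\tau$-factorization.

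For the reverse implication, suppose $\Gamma_{\tau}(x)=G_{\tau}(x)=(\{x\},\varnothing)$, loop-free. Since the definition of $V$ forces every vertex to lie in $\overline{Irr_{\tau}}(D)$, the very fact that $x$ appears as a vertex already implies $x$ is $\tau$-irreducible. Alternatively, invoking the $\tau$-atomicity of $D$ (so as to avoid a tautological identification), factor $x=\lambda r_{1}\cdots r_{l}$ with each $r_{i}$ $\tau$-irreducible; each $r_{i}$ is a $\tau$-divisor of $x$, so each representative $[r_{i}]$ lies in $V=\{x\}$, forcing $r_{i}\sim x$ for every $i$. If $l\geq 2$, then $r_{1}r_{2}\mid_{\tau}x$ would yield an edge (or loop) at $x$, contradicting $E=\varnothing$ and the loop-free hypothesis; therefore $l=1$ and $x\sim r_{1}$ is $\tau$-irreducible.

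The most delicate step will be the edge/loop analysis in the forward direction: one must argue that the existence of the edge $(x,x)$ or of a loop at $x$ really does manufacture a non-trivial $\tau$-factorization of $x$. Here the hypotheses that $\tau$ is symmetric and associate preserving are essential, since they allow us to freely substitute any associate of a factor (in particular, to pass between the representative $x\in\overline{Irr_{\tau}}(D)$ and any associate appearing in an actual $\tau$-factorization) while keeping the pairwise $\tau$-relations intact. Apart from this bookkeeping, the statement is an immediate consequence of the definitions.
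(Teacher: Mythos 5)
Your proposal is correct and follows essentially the same route as the paper: the forward direction is a direct unwinding of the definitions (which the paper simply labels ``Clear''), and the reverse direction is the same contrapositive argument, using $\tau$-atomicity to produce a non-trivial $\tau$-atomic factorization $x=a_1a_2\cdots a_n$ with $n\ge 2$ and observing that its factors would force either a second vertex or a loop. Your explicit remark that the associate-preserving hypothesis is what lets one pass between actual factors and their chosen representatives is a point the paper leaves implicit, but it does not change the argument.
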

\begin{proof}
$(\Longrightarrow )$ Clear.\\
$(\Longleftarrow )$ Let $x\in D^{\#}$ such that $G_{\tau }(x)$ is a single vertex and $x$ is not $\tau $-irreducible. Then there is a non-trivial $\tau $-atomic factorization $x=a_{1}a_{2}...a_{n}$ with $n\geq 2$. This yields $a_{1},a_{2}\in V(G_{\tau }(x))$, but there is only one vertex and no loops in $G_{\tau }(x)$. Contradicting the hypothesis, $x$ is $\tau $-irreducible.
\end{proof}

\bigskip
Now, we present definitions that will be used in the next results.
\begin{Defns}
Let $D$ be a domain with a relation $\tau $ on $D^{\#}.$
\begin{enumerate}
\item We say that $\tau $ is right multiplicative if $a,b\in D^{\#}$ such that $a\tau b$, then $ax\tau bx$ for all $x\in D^{\#}$. Similarly, we can define left multiplicative. A relation $\tau $ is multiplicative if it is both right and left multiplicative.
\item We say that $\tau $ is right cancellative if $a,b,x\in D^{\#}$ such that $ax\tau bx$, then $a\tau b$. Similarly, we can define left cancellative. A relation $\tau $ is cancellative if it is both right and left cancellative.
\item We define a $\tau $-refinement of a $\tau $-factorization $a_{1}a_{2}...a_{n}$ to be a factorization of the form $b_{11}...b_{1m_{1}}b_{21}...b_{2m_{2}}...b_{n1}...b_{nm_{n}}$, where $a_{i}=b_{i1}...b_{im_{i}}$ is a $\tau $-factorization for each $i$. We say that $\tau $ is refinable if every $\tau $-refinement of a $\tau $-factorization is a $\tau $-factorization.
\end{enumerate}
\end{Defns}

\begin{lemma}
\label{Lemm t}Let $D$ be a domain with a symmetric and associate preserving relation $\tau $ on $D^{\#}$ and $x$ a normal $\tau $-irreducible element in $D$. If $r,r^{\prime }\in D$ such that $xr=r^{\prime }x$. Then
\begin{enumerate}
    \item $r$ is a unit if and only if $r^{\prime }$ is a unit,
    \item $r$ is a normal element if and only if $r^{\prime }$ is a normal element, and
    \item if $\tau $ is right multiplicative and left cancellative. Then $r$ is $\tau $-irreducible if and only if $r^{\prime }$ is $\tau $-irreducible.
\end{enumerate}
\end{lemma}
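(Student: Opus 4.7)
The backbone of the argument is a single observation: because $x$ is normal and $D$ is a domain (so $x$ is not a zero-divisor), the equation $\phi(d)\,x = xd$ unambiguously defines a ring automorphism $\phi \colon D \to D$, with inverse $\psi$ characterized by $d\,x = x\,\psi(d)$. The hypothesis $xr = r'x$ is then just the statement $\phi(r) = r'$, so parts (1)--(3) each amount to verifying that $\phi$ (resp.\ $\psi$) respects the relevant structure.

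For part (1), any ring automorphism sends $U(D)$ bijectively to itself, so $r \in U(D) \iff \phi(r) = r' \in U(D)$. For part (2), if $rD = Dr$ then applying $\phi$ set-wise gives $\phi(r)\phi(D) = \phi(D)\phi(r)$, i.e.\ $r'D = Dr'$; the converse follows by applying $\psi$. These two parts are essentially formal once $\phi$ is set up.

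The real work is in part (3), which I would prove by contrapositive in both directions via the automorphism. Suppose $r' = \lambda b_1 b_2 \cdots b_n$ is a non-trivial $\tau$-factorization. Using normality of $x$, I would write $b_i x = x c_i$ with $c_i := \psi(b_i)$ and $\lambda x = x \lambda'$ with $\lambda' := \psi(\lambda)$, then iteratively pull the $x$ to the left, finally left-cancelling it in the domain $D$ to obtain the factorization $r = \lambda' c_1 c_2 \cdots c_n$. Part (1) already guarantees that $\lambda' \in U(D)$ and each $c_i \in D^{\#}$, so the only remaining ingredient is verifying $c_i \tau c_j$. This is exactly where "right multiplicative" and "left cancellative" come in: from $b_i \tau b_j$, right multiplicativity by $x$ gives $b_i x \tau b_j x$, i.e.\ $xc_i \tau xc_j$, and left cancellativity then yields $c_i \tau c_j$. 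Contrapositive: a non-trivial $\tau$-factorization of $r'$ gives one of $r$, so $r$ $\tau$-irreducible forces $r'$ $\tau$-irreducible.

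For the reverse implication, I would start from $r = \lambda c_1 \cdots c_n$ and push it forward through $\phi$, using $xc_i = b_i x$ with $b_i := \phi(c_i)$ and $x\lambda = \lambda'' x$ to produce $r' = \lambda'' b_1 \cdots b_n$, then check that $b_i \tau b_j$. I expect this forward-pushing step to be the main obstacle, because the "obvious" way to transfer $c_i \tau c_j$ to $b_i \tau b_j$ via the identity $xc_i = b_i x$ reads the multiplicativity/cancellativity hypotheses in the opposite sense from what was used above; one has to be careful to reorganize the argument (possibly by passing through $\psi$ applied to a would-be $\tau$-factorization on the $r'$ side, or by invoking symmetry and associate-preservation of $\tau$ together with the $\tau$-irreducibility of $x$) so that exactly the right-multiplicative and left-cancellative hypotheses suffice. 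Once that translation is in hand, the contrapositive closes part (3) in the same way as above.
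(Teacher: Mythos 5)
Your completed steps coincide with the paper's own proof. The paper handles (1) and (2) by the same normality manipulations that your automorphism $\phi$ packages (it writes $x=r'xr^{-1}=r'r^{-1\prime}x$ and $r'Dx=Dxr=Dr'x$ and cancels $x$ in the domain), and for (3) it carries out exactly your first computation, in the two-factor case: from $r'=yz$ with $y\,\tau\,z$ it pulls $x$ through, $xr=r'x=yzx=yxz'=xy'z'$, cancels to get $r=y'z'$, and transfers the relation via $y\,\tau\,z\Rightarrow yx\,\tau\,zx$ (right multiplicativity), $yx=xy'$, $zx=xz'$, and left cancellation. So the implication ``$r$ $\tau$-irreducible $\Rightarrow$ $r'$ $\tau$-irreducible'' is proved identically in both texts; your version with $n$ factors and the unit $\lambda$ is, if anything, slightly more complete.

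The converse you could not finish is a genuine gap in your proposal, and it is equally a gap in the paper: the paper dismisses it with ``proving `only if' will be similar,'' but it is not similar. As you observe, right multiplicativity plus left cancellativity only transport $\tau$ along $\psi=\phi^{-1}$ (from the factors of $r'$ to the factors of $r$); to get $\phi(c_i)\,\tau\,\phi(c_j)$ from $c_i\,\tau\,c_j$ one would need left multiplicativity and right cancellativity, and neither symmetry, nor associate preservation, nor the $\tau$-irreducibility of $x$ (which is never used in the proof of (3)) obviously substitutes. A $\tau$-preserving bijection of $D^{\#}$ need not have a $\tau$-preserving inverse, so the ``if and only if'' in (3) is not established by either argument under the stated hypotheses; one should either add the mirror-image hypotheses for the converse or weaken (3) to a one-way implication. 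Note also that the direction invoked later in the paper (in Theorem \ref{main t}, where a normal factor $\alpha_j$ is commuted leftward via $\pi_k\alpha_j=\alpha_j\pi_k'$, so that the known irreducible plays the role of $r'$ and the new element plays the role of $r$) is precisely the unproved direction, so the obstacle you flag is not cosmetic.
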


\begin{proof}
We will suffice with proving "if" and proving "only if" will be similar.
\begin{enumerate}
    \item Let $r$ be a unit and $xr=r^{\prime }x$. Then $x=r^{\prime }xr^{-1}=r^{\prime }r^{-1\prime }x$. Therefore, $r^{\prime }r^{-1\prime }=1$, and $r^{\prime }$ is a right unit. Since $D$ is a domain, $r^{\prime }$ is a unit.
    \item Since $x$ and $r$ are normal elements and $r^{\prime }x=xr$, we have $r^{\prime}Rx=Rxr=Rr^{\prime }x$. Since $D$ is a domain, $r^{\prime}R=Rr^{\prime }$. Therefore, $r^{\prime }$ is normal.
    \item Let $r$ be $\tau $-irreducible$\ $and $r^{\prime }$ not $\ \tau $-irreducible. Then $r^{\prime }=yz$ such that $y,z\in D^{\#}$, and $y\tau z$. Hence \begin{equation} 
    xr=r^{\prime }x=yzx=yxz^{\prime }=xy^{\prime }z^{\prime }.  \label{can mul} 
    \end{equation}
    \newline 
    Therefore, $r=y^{\prime }z^{\prime }$, and from (1), we have that $y^{\prime}$ and $z^{\prime }$ are nonunits. Since $y\tau z$ and $\tau $ is right multiplicative, we have $yx\tau zx$ and from equation (\ref{can mul}), $xy^{\prime }\tau xz^{\prime }$. Since $\tau $ is  left cancellative, we have $y^{\prime }\tau z^{\prime }$. Thus $r$ is not $\tau $-irreducible and this is a contradiction.
\end{enumerate}
\end{proof}

\bigskip

The following examples show that the conditions in part 3 of Lemma \ref{Lemm t} are not redundant.

\begin{example}
\label{not red 1}Let $R=\mathbb{Z}\lbrack 1,i,j,k]$ and $D=R[x]$. The element $(1+i)$ is normal in $D$ because for any element 
$\displaystyle\sum_{m=0}^{n}(a_{0}+a_{1}i+a_{2}j+a_{3}k)_{m}x^{m} \in D$, there is an element $\displaystyle\sum_{m=0}^{n}(a_{0}+a_{1}i-a_{3}j+a_{2}k)_{m}x^{m}\in D$ such that 
\begin{equation*}
(1+i)\displaystyle\sum_{m=0}^{n}(a_{0}+a_{1}i+a_{2}j+a_{3}k)_{m}x^{m}=\displaystyle\sum_{m=0}^{n}(a_{0}+a_{1}i-a_{3}j+a_{2}k)_{m}x^{m}(1+i).
\end{equation*}
Consider the relation $\tau _{1}=\{(\beta f(x),\gamma g(x)), (\gamma g(x),\beta f(x)):f(x),g(x)\in D\}$, where
\begin{equation*}
\beta \in \overline{(1+x)} =\{\pm (1+x),\pm i(1+x),\pm j(1+x),\pm k(1+x)\},
\end{equation*}
\begin{equation*}
\gamma \in \overline{(1+j)} = \{\pm (1+j),\pm (i+k),\pm (j-1),\pm (k-i)\}.
\end{equation*}
It is clear that $\tau _{1}$ is symmetric and right multiplicative. Also, $\tau _{1}$ is
associate preserving, since for any element $\beta \in \overline{(1+x)}$,
\begin{equation*}
\overline{\beta f(x)}=\{u\beta f(x)v:u,v\in U(D)\}=\{u\beta vv^{-1}f(x)v:u,v\in U(D)\},
\end{equation*}
\begin{equation*}
=\{u\beta v {f}^{\prime} (x):u,v\in U(D)\}=\{{\beta}^{\prime} {f}^{\prime}(x)\},
\end{equation*} 
where ${f}^{\prime}(x)=v^{-1}f(x)v \in D$ and ${\beta}^{\prime}=u\beta v \in \overline{(1+x)}$. Similarly, if $\gamma \in \overline{(1+j)}$, then
\begin{equation*}
\overline{\gamma g(x)}=\{{\gamma}^{\prime} {g}^{\prime}(x)\},
\end{equation*} 
 where ${g}^{\prime}(x) \in D$ and ${\gamma}^{\prime} \in \overline{(1+j)}$  . It follows that $\tau _{1}$ is associate preserving. We claim that $\tau _{1}$ is not left cancellative. On the contrary suppose that $\tau _{1}$ is left cancellative.\\
 Since $((1+j)(1+i),(1+x)(1+i))\in \tau _{1}$ and $(1+j)(1+i)=(1+i)(1-k)$ and $(1+x)(1+i)=(1+i)(1+x)$, we have
\begin{equation*}
((1+i)(1-k),(1+i)(1+x))\in \tau _{1},
\end{equation*}
then, 
\begin{equation*}
((1-k),(1+x))\in \tau _{1}.
\end{equation*}%
\newline
This is a contradiction from the definition of $\tau _{1}$. Thus $\tau _{1}$ is not left cancellative. Now, let $y=(1-k)(1+x)$ and $y^{\prime}=(1+j)(1+x)$ such that
\begin{equation*}
(1+i)y=(1+i)(1-k)(1+x)=(1+j)(1+x)(1+i)=y^{\prime}(1+i).
\end{equation*}
Since $(1-k,1+x)\notin \tau _{1}$ and $(1+j,1+x)\in \tau _{1}$, the element $y=(1-k)(1+x)$ is $\tau
_{1}-$irreducible, but $y^{\prime}=(1+j)(1+x)$ is not $\tau _{1}-$irreducible.
\end{example}

\begin{example}
Let $D$ be the same in Example \ref{not red 1} and $\tau _{2}=\{(\beta,\gamma),(\gamma,\beta)\}$, where $\beta \in \overline{(1+x)}$ and $\gamma \in \overline{(1+j)}$. Clear $\tau _{2}$ is symmetric, associate preserving, and left cancellative. However, $\tau_{2}$ is not right multiplicative, because $((1+x),(1+j))\in \tau _{2}$ but $((1+x)x,(1+j)x)\notin \tau _{2}$. In this case, the element $y=(1-k)(1+x)$ is $\tau
_{2}-$irreducible, but $y^{\prime}=(1+j)(1+x)$ is not $\tau _{2}-$irreducible..
\end{example}
The following result gives necessary and sufficient graph theoretic conditions for a $\tau $-n-atomic domain to be a $\tau $-n-FFD.
\begin{theorem}\label{ffd}
Let $D$ be a $\tau $-n-atomic domain. Then the following statements are equivalent:
\begin{enumerate}
\item $D$ is a $\tau $-n-FFD.
\item $\Gamma_{\tau }(x)$ is finite for all $x\in D^{\#}$.
\item For all $x\in D^{\#}$, outdeg$(w)$ and indeg$(w)$ are finite for all $w\in V(\Gamma_{\tau }(x))$.
\item For all $x\in D^{\#}$, outdegl$(w)$ and indegl$(w)$ are finite for all $w\in V(\Gamma_{\tau }(x))$.
\end{enumerate}
\end{theorem}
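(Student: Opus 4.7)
The plan is to establish the cyclic chain $(1) \Rightarrow (2) \Rightarrow (4) \Rightarrow (3) \Rightarrow (1)$. The two easy links are $(2) \Rightarrow (4)$, which is immediate because a finite graph has finite loop count and finitely many edges incident to each vertex (so $outdegl(w)$ and $indegl(w)$ are finite), and $(4) \Rightarrow (3)$, which follows at once from the definitional inequalities $indegl(w) \geq indeg(w)$ and $outdegl(w) \geq outdeg(w)$.

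For $(1) \Rightarrow (2)$ I would fix $x \in D^{\#}$ and first observe that the vertex set $V(\Gamma_{\tau}(x)) = \{y \in \overline{Irr_{\tau}}(D) : y \mid_{\tau} x\}$ is finite by the defining property of a $\tau$-n-FFD. Since all non-loop directed edges sit inside $V \times V$, their number is bounded by $|V|(|V|-1)$. What remains is to control the loop count at each vertex $y$, which equals $n-1$ for the largest $n$ with $y^{n} \mid_{\tau} x$; I must verify that such a largest $n$ exists. If not, $x$ admits $\tau$-n-atomic factorizations of unbounded length, each containing $y$ as a factor at least $n$ times. Using normality of $y$ to shift the $y$-factors to one side, I would extract a strictly ascending chain of principal right ideals $\langle x \rangle \subsetneq \langle x_{1} \rangle \subsetneq \langle x_{2} \rangle \subsetneq \cdots$, contradicting the ACCP-type consequence of $\tau$-n-FFD. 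Hence the loops are bounded and $\Gamma_{\tau}(x)$ is finite.

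For $(3) \Rightarrow (1)$ I would argue the contrapositive. Suppose $D$ is not $\tau$-n-FFD; pick $x \in D^{\#}$ with infinitely many nonassociate normal $\tau$-irreducible divisors $y_{1}, y_{2}, \ldots$, so $V(\Gamma_{\tau}(x))$ is infinite. I aim to exhibit some $x' \in D^{\#}$ and a vertex $w \in V(\Gamma_{\tau}(x'))$ with infinite $indeg(w)$ or $outdeg(w)$. A natural construction is $x' = y_{0}x$ for a fixed normal $\tau$-atom $y_{0}$ chosen $\tau$-compatible with factorizations of $x$: prepending $y_{0}$ to a $\tau$-n-atomic factorization of $x$ beginning with $y_{i}$ gives, thanks to the symmetry and associate-preservation of $\tau$, a $\tau$-factorization of $x'$ in which $y_{0}$ and $y_{i}$ appear consecutively, so $(y_{0}, y_{i}) \in E(\Gamma_{\tau}(x'))$. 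A pigeonhole step over the possible leading atoms in $\tau$-n-atomic factorizations of $x$ then produces a single $y_{0}$ with $outdeg(y_{0}) = \infty$, contradicting (3).

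The main obstacle is $(3) \Rightarrow (1)$: manufacturing a vertex of infinite degree out of the bare assumption of infinitely many atom divisors of $x$ requires careful use of the $\tau$-relation together with normality, since one cannot simply transplant the commutative argument of Mooney — the auxiliary atom $y_{0}$ must sit $\tau$-compatibly with every other factor in the chosen factorizations, and its existence has to be justified from the noncommutative $\tau$-n-atomic structure. A secondary but still technical obstacle is the loop bound in $(1) \Rightarrow (2)$, which rests on FFD implying ACCP in the noncommutative $\tau$-setting; this will either need a short argument in place or a citation to a corresponding lemma established earlier in the paper or in \cite{paperphd1}.
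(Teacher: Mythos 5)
Your easy links ($(2)\Rightarrow(4)$ and $(4)\Rightarrow(3)$) are fine, and your cycle is a legitimate reorganization of the implications. The load-bearing step $(3)\Rightarrow(1)$, however, is exactly where your argument has a hole, and you flag it yourself without closing it: the auxiliary atom $y_{0}$ that must be $\tau$-compatible with every factor of every chosen factorization of $x$ is never produced, and the pigeonhole over ``leading atoms'' does not obviously isolate a single vertex of infinite degree when there are infinitely many candidate leading atoms. The paper avoids the external atom entirely: if $y\in D^{\#}$ has an infinite set $A=\{\pi_{i}\}$ of nonassociate normal $\tau$-irreducible divisors, it passes to the element $y^{2}$. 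Concatenating a $\tau$-factorization of $y$ in which $\pi_{j}$ has been shifted to the right end (using normality) with one in which $\pi_{k}$ has been shifted to the left end exhibits $\pi_{j}\pi_{k}\mid_{\tau}y^{2}$ for every ordered pair from $A$, so every $\pi_{j}$ already has infinite outdegree and indegree in $\Gamma_{\tau}(y^{2})$, contradicting (3). That single squaring step is the idea your proposal is missing; with it the construction of $x'$ and the compatibility of $y_{0}$ become unnecessary. (Both your version and the paper's quietly assume that the concatenated product is again a $\tau$-factorization, i.e.\ that factors coming from the two copies of $y$ are $\tau$-related; the paper does not address this either.)

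On $(1)\Rightarrow(2)$: your worry about unboundedly many loops is reasonable, but your proposed fix rests on ``$\tau$-n-FFD implies $\tau$-ACCP,'' which is neither proved in the paper nor an obvious consequence of the definition used here ($\tau$-n-atomic plus finitely many nonassociate normal $\tau$-irreducible divisors is an idf-type condition, and such conditions do not in general bound factorization lengths or multiplicities). The paper simply declares $1\Rightarrow2$ clear and, in its $3\Rightarrow4$ step, dismisses an infinite loop count as meaningless; if you want to keep your more careful route you need an actual argument, or a citation to an established lemma, showing that $y^{n}\mid_{\tau}x$ fails for large $n$, rather than an appeal to ACCP.
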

\begin{proof} 
$1 \Rightarrow 2 \Rightarrow 3$ and $4 \Rightarrow 3$ 
Clear.\\
$3 \Rightarrow 1$ Let $D$ not be $\tau $-n-FFD. Then there exists $y\in D^{\#}$ such that the set of its nonassociate $\tau $-normal irreducible divisors, $A=\{\pi_{i}\}$, is infinite. Thus in $\Gamma_{\tau }(y^{2})$, the vertices $\pi_{j}$ and $\pi_{k}$ are connected by edges $[\pi_{j},\pi_{k}]$ and  $[\pi_{k},\pi_{j}]$ for all $\pi_{j},\pi_{k}\in A$. Therefore, outdeg$(\pi_{j})$ and indeg$(\pi_{j})$ are infinite and $(3)$ fails.\\
$3 \Rightarrow 4$ Let $(4)$ is fails. Then either $(3)$ fails or some vertex $\pi_{j}$ in $\Gamma_{\tau }(x)$ has infinitely loops. In this case, ${\pi_{j}}^\infty$ $\tau $-divides $x$ and it has no mathematical meaning.
\end{proof}\\
\newline
Using the same step as in the proof of Theorem \ref{ffd}, we can derive the following result for an undirected irreducible divisor graph.
\begin{theorem}
Let $D$ be a $\tau $-n-atomic domain. Then the following statements are equivalent:
\begin{enumerate}
\item $D$ is a $\tau $-n-FFD.
\item $G_{\tau }(x)$ is finite for all $x\in D^{\#}$.
\item For all $x\in D^{\#}$, deg$(w)$ is finite for all $w\in V(G_{\tau }(x))$.
\item For all $x\in D^{\#}$, degl$(w)$ is finite for all $w\in V(G_{\tau }(x))$.
\end{enumerate}
\end{theorem}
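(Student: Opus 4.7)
The plan is to mirror the directed-graph argument of Theorem~\ref{ffd} verbatim, replacing the pair (indeg, outdeg) by the single undirected degree. I would establish the cycle $1\Rightarrow 2\Rightarrow 3\Rightarrow 1$ together with the equivalence $3\Leftrightarrow 4$. The easy implications are $1\Rightarrow 2$ (if $V(G_\tau(x))$ is finite by the $\tau$-n-FFD hypothesis, then so is the edge set, bounded by $\binom{|V|}{2}+|V|$ once loops are counted), $2\Rightarrow 3$ (since $\deg(w)\le |V(G_\tau(x))|-1$ when $G_\tau(x)$ is finite), and $4\Rightarrow 3$ (immediate from $\deg(w)\le\mathrm{degl}(w)$).

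The decisive direction is $3\Rightarrow 1$, which I would prove by contrapositive, following the template from Theorem~\ref{ffd}. Suppose $D$ is not a $\tau$-n-FFD; then some $y\in D^{\#}$ admits an infinite set $A=\{\pi_i\}_{i\in I}$ of pairwise nonassociate normal $\tau$-irreducible divisors. In $G_\tau(y^2)$ I would show that any two distinct $\pi_j,\pi_k\in A$ are adjacent: using normality of $\pi_j$ and $\pi_k$, write $y=\pi_j\alpha=\beta\pi_k$ for suitable $\alpha,\beta\in D^{\ast}$, so that $y^2=\pi_j(\alpha\beta)\pi_k$. A $\tau$-refinement of the central factor $\alpha\beta$, together with the symmetric and associate-preserving hypothesis on $\tau$, places $\pi_j$ and $\pi_k$ as consecutive atoms in a $\tau$-factorization of $y^2$, giving $\pi_j\pi_k\mid_\tau y^2$ and therefore $\{\pi_j,\pi_k\}\in E(G_\tau(y^2))$. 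Fixing any single $\pi_j$ then produces infinitely many neighbours, forcing $\deg(\pi_j)=\infty$ in $G_\tau(y^2)$ and contradicting~$3$.

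For $3\Rightarrow 4$, again by contrapositive, suppose $\mathrm{degl}(w)$ is infinite at some vertex $w\in V(G_\tau(x))$. Either $\deg(w)$ is already infinite, which contradicts~$3$ directly, or $w$ carries infinitely many loops, i.e.\ $w^n\mid_\tau x$ for every $n\in\mathbb{N}$. The latter possibility is ruled out exactly as in the proof of Theorem~\ref{ffd}: in a $\tau$-n-atomic domain $x$ admits a finite $\tau$-n-atomic factorization, so only finitely many consecutive powers of a fixed atom can $\tau$-divide $x$, and the ``infinitely many loops'' alternative has no mathematical meaning.

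The main obstacle is the edge-production step inside $3\Rightarrow 1$: making rigorous why the identity $y^2=\pi_j(\alpha\beta)\pi_k$ genuinely produces $\pi_j\pi_k$ (or $\pi_k\pi_j$) as a consecutive pair of $\tau$-factors in a $\tau$-factorization of $y^2$. This is the same subtlety the authors leave implicit in Theorem~\ref{ffd}: it relies on the normality of the $\pi_i$ (to freely slide them to the outside of $y^2$) and on the symmetric, associate-preserving character of $\tau$ (to guarantee that a refinement of the middle block $\alpha\beta$ remains $\tau$-compatible with the flanking atoms $\pi_j,\pi_k$). Once this is granted, the undirected statement is literally the symmetrisation of Theorem~\ref{ffd} and no further work is required.
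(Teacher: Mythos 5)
Your proposal is correct and follows exactly the route the paper takes: the paper's own ``proof'' of this theorem is literally the instruction to repeat the argument of Theorem~\ref{ffd} with the undirected degree in place of the in/out-degrees, which is what you do (including the $y^2$ trick for $3\Rightarrow 1$ and the loop dichotomy for $3\Rightarrow 4$). Your explicit treatment of the edge-production step, and your acknowledgement that it rests on the same normality/refinement subtlety the authors leave implicit, is if anything more careful than the original.
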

 The following proposition gives the condition under which the $\tau $- n-atomic domain satisfies $\tau $-ACC on principal right (left) ideals.
\begin{proposition}
Let $D$ be a domain with a symmetric, refinable, associate preserving, right multiplicative, and left cancelitive relation $\tau $ on $D^{\#}$. If $D$ is $\tau $-n- atomic such that for all $x\in D^{\#}$, $outdegl(a)<\infty $ (resp. $indegl(a)<\infty $) for all $a\in V(\Gamma _{\tau }(x))$, then $D$ satisfies the $\tau $-ascending chain condition on the principal left (resp. right) ideals ($\tau $-ACCPr).
\end{proposition}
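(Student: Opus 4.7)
The plan is to prove the contrapositive, focusing on the outdegl case (the indegl/right-ideal case is then symmetric). Assume $D$ fails $\tau$-ACC on principal left ideals, so there is an infinite strictly ascending chain $\langle a_0\rangle \subsetneq \langle a_1\rangle \subsetneq \cdots$ with $a_{i+1}\mid_\tau a_i$ for every $i$. The goal is to exhibit some $x\in D^{\#}$ and a vertex $v\in V(\Gamma_\tau(x))$ with outdegl$(v)=\infty$; I will take $x=a_0$.

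First I would unpack the chain: from $\langle a_i\rangle \subsetneq \langle a_{i+1}\rangle$ I get a nonunit $d_i\in D^{\#}$ with $a_i=d_i a_{i+1}$, and iterating gives $a_0=d_0 d_1\cdots d_{n-1}a_n$ for every $n$. Using refinability of $\tau$, I would first show transitivity of $\mid_\tau$ (refine the $\tau$-factorization of $a_{i-1}$ that exhibits $a_i$ by the $\tau$-factorization of $a_i$ that exhibits $a_{i+1}$, moving units through $\sim$ with associate preservation), so $a_n\mid_\tau a_0$ for every $n$. Then invoking $\tau$-n-atomicity on each $d_i$ (each nonunit, hence supplying at least one normal $\tau$-irreducible factor) and on $a_n$, and once more applying refinability, I produce, for every $n$, a single $\tau$-factorization of $a_0$ entirely into normal $\tau$-irreducibles whose length grows without bound in $n$.

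The second step is to convert unbounded length into unbounded outdegl at some vertex. Each such $\tau$-factorization $a_0=\pi_1\pi_2\cdots \pi_L$ traces a directed walk in $\Gamma_\tau(a_0)$: consecutive factors give the edge $(\pi_i,\pi_{i+1})$ when distinct, and force $\pi_i^{\,2}\mid_\tau a_0$ (hence a loop at $\pi_i$) when equal, where Lemma \ref{Lemm t}(iii) together with the right multiplicative and left cancellative hypotheses on $\tau$ guarantees that all factors stay in the normal $\tau$-irreducible class after the commutations induced by normality. Assume toward a contradiction that every vertex $v$ of $\Gamma_\tau(a_0)$ has finite outdegl$(v)$. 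Then for each $v$, the set of "successors" (out-neighbors together with self, permitted only up to the loop count $\max\{n:v^{\,n}\mid_\tau a_0\}-1$) is finite, and in particular the multiplicity of $v$ in any single $\tau$-factorization of $a_0$ is bounded by $1+\text{(loops on }v)$. A König/pigeonhole argument on the finitely branching successor tree, combined with these per-vertex multiplicity bounds, yields a uniform bound on the length of any $\tau$-factorization of $a_0$ into normal $\tau$-irreducibles, contradicting the unbounded length produced in the previous paragraph.

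The main obstacle is precisely this last pigeonhole step: finite outdegl at every vertex is compatible with infinitely many vertices, so arbitrarily long walks are not ruled out in an arbitrary directed graph. The resolution should exploit the explicit family $a_0=d_0d_1\cdots d_{n-1}a_n$: as $n$ grows, the leftmost normal $\tau$-irreducible of the prefix $d_0d_1\cdots d_{n-1}$ stabilises (it is always a normal $\tau$-irreducible factor of $d_0$), so one fixed vertex $v_0$ acquires, across the family, either infinitely many distinct immediate successors (forcing outdeg$(v_0)=\infty$) or arbitrarily high powers $v_0^{\,k}\mid_\tau a_0$ (forcing infinitely many loops). Either way outdegl$(v_0)=\infty$, the required contradiction. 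The symmetric argument with $a_i=a_{i+1}e_i$ and the rightmost normal $\tau$-irreducible of the suffix handles the indegl/right-ideal case.
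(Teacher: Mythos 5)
Your overall route is the same as the paper's: pass to the contrapositive, unwind the failing chain into $x_1=a_1a_2\cdots a_nx_{n+1}$, use $\tau$-n-atomicity plus refinability to produce $\tau$-factorizations of $x_1$ into normal $\tau$-irreducibles of unbounded length, and then split into two cases --- infinitely many distinct irreducibles appearing (normality turns each of them into an out-neighbour of a fixed vertex, so some outdegree is infinite) versus finitely many distinct irreducibles (so one of them recurs with unbounded multiplicity and accumulates infinitely many loops). You correctly diagnose that a na\"{\i}ve K\"onig/finite-branching argument cannot work, and your repair is in spirit exactly the paper's dichotomy.

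However, the dichotomy as you finally state it is false, because you pin both horns on the single vertex $v_0$: it is not true that either $v_0$ has infinitely many distinct successors or $v_0^k\mid_\tau a_0$ for arbitrarily large $k$. Take $d_0=v_0$ and arrange that every later $d_i$ refines into copies of one other normal $\tau$-irreducible $u\neq v_0$; then $v_0$ has a single out-neighbour and no high powers, yet the factorizations still grow without bound --- the infinite $\mathrm{outdegl}$ lives at $u$, not at $v_0$. The correct split is over the whole set of irreducibles occurring across the family: if infinitely many nonassociate ones occur, normality gives your fixed $v_0$ infinitely many out-neighbours (note these are not literally ``immediate successors,'' since the prefix of the walk stabilises; you must commute each new factor up to $v_0$ using normality, with Lemma \ref{Lemm t} keeping it normal $\tau$-irreducible); if only finitely many occur, then \emph{some} vertex, not necessarily $v_0$, repeats with unbounded multiplicity and carries infinitely many loops. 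Since the proposition only needs $\mathrm{outdegl}$ to blow up at \emph{some} vertex of $\Gamma_\tau(x_1)$, this is an immediate fix, but as written your concluding sentence ``either way $\mathrm{outdegl}(v_0)=\infty$'' does not follow.
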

\begin{proof} 
We will suffice with proving "left" and proving "right" will be similar. Assume that $D$ does not satisfy the $\tau $-ACC on the principal left ideals, then there exists an infinite chain of principal left ideals $\left\langle x_{1}\right\rangle \subsetneq \left\langle x_{2}\right\rangle \subsetneq ...$\ such that $x_{i+1}|_{\tau }x_{i}$. Thus
\begin{equation*}
x_{1}=a_{1}x_{2}=a_{1}a_{2}x_{3}=a_{1}a_{2}a_{3}x_{4}=...
\end{equation*}%
for some $a_{i}\in D^{\#}$. Since $D$ is $\tau $-n-atomic and $\tau $ is refinable,
\begin{equation}
x_{1}=(\displaystyle\prod_{k_{1}=1}^{n_{1}}a_{1,k_{1}})x_{2}=(\displaystyle\prod_{k_{1}=1}^{n_{1}}a_{1,k_{1}})(\displaystyle\prod_{k_{2}=1}^{n_{2}}a_{2,k_{2}})x_{3}=(\displaystyle\prod_{k_{1}=1}^{n_{1}}a_{1,k_{1}})(\displaystyle\prod_{k_{2}=1}^{n_{2}}a_{2,k_{2}})(\displaystyle\prod_{k_{3}=1}^{n_{3}}a_{3,k_{3}})x_{4}=...,  \label{33}
\end{equation}
\newline
where $a_{i,j}$ are $\tau $-normal irreducibles and the factorization in each iteration of equation (\ref{33}) increases in lenght. If the elements $a_{i,j}$ are infinite. Then by the normality of $a_{i,j}$, we have infinite outdegree in $\Gamma _{\tau }(x_{1})$. Otherwise, if $a_{i,j}$ are finite, then one of the $a_{i_{0},j_{0}}$ for some $i_{0}$ and $j_{0}$ appears infinitely often in the $\tau $-factorization of $x_{1}$, and thus $a_{i_{0},j_{0}}$ has an infinite number of loops in $\Gamma _{\tau }(x_{1})$. Either of these conditions implies that $outdegl(a)$ is infinite for some vertex $a$ of $\Gamma _{\tau }(x_{1})$. This is a contradiction, and as desired, $D$ must satisfy the $\tau $-ascending chain condition on the principal left ideals ($\tau $-ACCPr).
\end{proof}

\bigskip

By using the same steps as in the directed $\tau -$irreducible divisor graph, we obtain the setting of the undirected $\tau -$irreducible divisor graph as follows. 

\begin{proposition}
Let $D$ be a domain with a symmetric, refinable, associate preserving, right multiplicative, and left cancelitive relation $\tau $ on $D^{\#}$. If $D$ is $\tau $-n- atomic such that for all $x\in D^{\#}$, $outdegl(a)<\infty $ (resp. $indegl(a)<\infty $) for all $a\in V(G _{\tau }(x))$, then $D$ satisfies the $\tau $-ascending chain condition on the principal left (resp. right) ideals ($\tau $-ACCPr).
\end{proposition}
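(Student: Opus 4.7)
The plan is to transcribe the proof of the preceding proposition with only cosmetic changes, since that argument used the directed graph solely as a bookkeeping device for which normal $\tau$-irreducibles occur together in $\tau$-factorizations of $x_1$; the undirected graph $G_{\tau}(x_1)$ encodes exactly the same adjacency data, and $outdegl$ is defined analogously (counting loops together with edges incident at the vertex).

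First, proceeding by contradiction, I would assume $D$ does not satisfy the $\tau$-ACC on the principal left ideals, producing an infinite strictly ascending chain $\langle x_1\rangle\subsetneq \langle x_2\rangle \subsetneq \cdots$ with $x_{i+1}\mid_{\tau} x_i$, hence expressions $x_1=a_1 x_2=a_1a_2x_3=\cdots$ for some $a_i\in D^{\#}$. Using $\tau$-n-atomicity to refine each $a_i$ into normal $\tau$-irreducibles $a_{i,1},\ldots,a_{i,n_i}$ and using refinability of $\tau$ to ensure the concatenated factorizations remain $\tau$-factorizations, I obtain, for every $i\geq 1$,
\[
x_1=\Bigl(\prod_{k_1=1}^{n_1}a_{1,k_1}\Bigr)\Bigl(\prod_{k_2=1}^{n_2}a_{2,k_2}\Bigr)\cdots\Bigl(\prod_{k_i=1}^{n_i}a_{i,k_i}\Bigr)x_{i+1},
\]
a $\tau$-factorization of $x_1$ whose length grows without bound.

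Next, I would apply the same dichotomy as in the directed case. Either the multiset of the $a_{i,j}$ contains infinitely many pairwise nonassociate normal $\tau$-irreducibles, or it contains only finitely many, in which case some $a_{i_0,j_0}$ appears infinitely often. In the first situation, normality of each $a_{i,j}$ together with Lemma \ref{Lemm t} (invoking the right-multiplicativity, left-cancellativity, and associate-preservation of $\tau$) permits the slide of any fixed $a_{i_0,j_0}$ past the other factors while retaining a $\tau$-factorization, which exhibits infinitely many nonassociate normal $\tau$-irreducibles adjacent to $a_{i_0,j_0}$ in $G_{\tau}(x_1)$, so $outdegl(a_{i_0,j_0})=\infty$. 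In the second situation, $a_{i_0,j_0}^{N}\mid_{\tau}x_1$ for every $N$, placing infinitely many loops at $a_{i_0,j_0}$ and again forcing $outdegl(a_{i_0,j_0})=\infty$. Either conclusion contradicts the hypothesis, and the right-ideal case is handled by the symmetric argument with $indegl$ in place of $outdegl$.

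The only delicate point, just as in the directed version, is justifying the slide in the first case of the dichotomy: one must check that after rearrangement the resulting product $a_{i_0,j_0}\cdot(\text{rest})$ is still a $\tau$-factorization of a divisor of $x_1$, so that it genuinely produces an edge in $G_{\tau}(x_1)$ incident to $a_{i_0,j_0}$. This is precisely where the combined hypotheses on $\tau$ (symmetric, refinable, associate preserving, right multiplicative, left cancellative) are used, through Lemma \ref{Lemm t}, to guarantee that the normal $\tau$-irreducibles obtained after commuting past $a_{i_0,j_0}$ remain normal $\tau$-irreducibles and that the resulting pairing remains a $\tau$-factorization.
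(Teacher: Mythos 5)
Your proposal is correct and follows essentially the same route as the paper: the paper proves the directed version by refining the chain $x_1=a_1x_2=a_1a_2x_3=\cdots$ into normal $\tau$-irreducibles and running the same dichotomy (infinitely many nonassociate factors versus one factor repeating, hence infinite loops), and for the undirected statement it simply declares that the same steps apply. Your added remarks on justifying the slide via normality, Lemma \ref{Lemm t}, and the hypotheses on $\tau$ only make explicit what the paper leaves implicit.
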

In the next theorem, we use the following condition.\\
\newline
 \textbf{ Condition (**)} If $a, r,$ and $r'$ are normal $\tau $-irreducible elements in $D$ such that $ar=r'a$. Then $r$ and $r'$ are associated.\\
\newline
The following theorem is the main result of this section and provides the necessary and sufficient conditions for a $\tau $-n-atomic domain to be a $\tau $-n-UFD.

\begin{theorem}
\label{main t}Let $D$ be a $\tau $-n-atomic domain with a symmetric, associate preserving, right multiplicative, and left cancelitive relation $\tau $ on $D^{\#}$.  Consider the following statements:
\begin{enumerate}
\item $D$ is a $\tau $-n-UFD;
\item $\Gamma _{\tau }(x)$ is a tournament for all $x\in D^{\#}$;
\item $\Gamma _{\tau }(x)$ is unilaterally connected for all $x\in D^{\#}$;
\item $\Gamma _{\tau }(x)$ is weakly connected for all $x\in D^{\#}$.
\end{enumerate}
Then
 \begin{center}
  $1 \Rightarrow 2 \Rightarrow 3 \Rightarrow 4$.   
 \end{center}
Moreover, if $D$ satisfies Condition (**), then all statements are equivalent.
\end{theorem}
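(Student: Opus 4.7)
The easy implications $2\Rightarrow 3\Rightarrow 4$ are immediate from the definitions: in a tournament the single directed edge between any two distinct vertices is itself a directed path of length one, and any directed path, read without orientation, is an undirected path.

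For $1\Rightarrow 2$, I plan to fix $x\in D^{\#}$ with a $\tau$-n-atomic factorization $x=x_{1}x_{2}\cdots x_{k}$ (unique up to associates and permutation by hypothesis) and take two nonassociate vertices $a,b\in V(\Gamma_{\tau}(x))$. The first step is to show $a\sim x_{i}$ and $b\sim x_{j}$ for distinct $i,j$: from $a\mid_{\tau}x$ I get a $\tau$-factorization of $x$ in which $a$ appears, and refining each of the remaining factors into normal $\tau$-irreducibles (the refinement of $a$ itself being trivial, as $a$ is already $\tau$-irreducible) yields a $\tau$-n-atomic factorization of $x$; the uniqueness hypothesis then forces $a\sim x_{i}$. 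The second step is to bring $x_{i}$ adjacent to $x_{j}$ in the factorization. Using normality of each $x_{\ell}$, I commute $x_{i}$ rightward past the intervening factors via relations $x_{i}r=r'x_{i}$; by Lemma \ref{Lemm t}, which uses precisely that $\tau$ is right multiplicative and left cancellative, each conjugate $r'$ is again normal $\tau$-irreducible. Once $x_{i}$ and $x_{j}$ are consecutive, their product appears as a single factor in a $\tau$-factorization of $x$, and after swapping $x_{i}$ and $x_{j}$ for the associated $a$ and $b$ and absorbing the resulting units (using that $\tau$ is associate preserving), I obtain $ab\mid_{\tau}x$ or $ba\mid_{\tau}x$, hence the required edge.

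For $4\Rightarrow 1$ under Condition (**) I argue the contrapositive. If $D$ is not a $\tau$-n-UFD, some $x\in D^{\#}$ admits two nonequivalent $\tau$-n-atomic factorizations $x=a_{1}\cdots a_{n}=b_{1}\cdots b_{m}$. Using Condition (**) to transport associations past intervening normal $\tau$-irreducibles, I cancel pairwise-associate factors from both sides and reduce to the case where no $a_{i}$ is associate to any $b_{j}$. I then partition $V(\Gamma_{\tau}(x))$ into $V_{A}=\{v:v\sim a_{i}\text{ for some }i\}$ and $V_{B}=\{v:v\sim b_{j}\text{ for some }j\}$, which by the reduction are disjoint, and claim no edge of $\Gamma_{\tau}(x)$ crosses the partition. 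For if $(c,d)$ were such a cross edge with $c\sim a_{i}$ and $d\sim b_{j}$, then $cd\mid_{\tau}x$ gives a further $\tau$-factorization of $x$ which, refined to a $\tau$-n-atomic factorization and compared with $a_{1}\cdots a_{n}$ and $b_{1}\cdots b_{m}$ via the $\tau$-n-UFD uniqueness, would force $a_{i}$ to be associate to some $b_{j'}$, contradicting the reduction. Hence $\Gamma_{\tau}(x)$ is not even weakly connected.

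The main obstacle is the no-cross-edge claim in $4\Rightarrow 1$. In the commutative Theorem 4.3 of \cite{generalirreducible} one simply rearranges factors; in our setting every noncommutative rewriting $ar=r'a$ demanded by such a rearrangement threatens to replace one normal $\tau$-irreducible by a genuinely different one, destroying the associate-class bookkeeping. Condition (**) is precisely what rescues the argument: it guarantees $r\sim r'$ for normal $\tau$-irreducibles under conjugation by a normal $\tau$-irreducible, so associations propagate cleanly through any reordering of a factorization and the $V_{A}/V_{B}$ partition remains stable under all the rewritings that a hypothetical cross edge would induce. Without (**), the partition need not be well-defined and the argument collapses, which is why the full equivalence requires (**) but the one-directional chain $1\Rightarrow 2\Rightarrow 3\Rightarrow 4$ does not.
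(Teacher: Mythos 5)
Your outline of $1\Rightarrow 2\Rightarrow 3\Rightarrow 4$ is fine in spirit (the paper is even terser on $1\Rightarrow 2$ than you are), but the key step of your $4\Rightarrow 1$ argument is circular. You argue the contrapositive: assuming $D$ is \emph{not} a $\tau$-n-UFD, you take two inequivalent factorizations $x=a_{1}\cdots a_{n}=b_{1}\cdots b_{m}$, reduce to the case where no $a_{i}$ is associate to any $b_{j}$, and then rule out a cross edge $(c,d)$ by saying the factorization coming from $cd\mid_{\tau}x$, ``compared with $a_{1}\cdots a_{n}$ and $b_{1}\cdots b_{m}$ via the $\tau$-n-UFD uniqueness, would force $a_{i}$ to be associate to some $b_{j'}$.'' But $\tau$-n-UFD uniqueness is exactly the property you have assumed fails; in this branch of the proof a cross edge merely produces yet another $\tau$-factorization of $x$ and contradicts nothing. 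The paper closes this hole with a minimal-counterexample (induction on length) argument: choose $y=\pi_{1}\cdots\pi_{m}=\alpha_{1}\cdots\alpha_{t}$ with $m$ minimal among all elements admitting two distinct $\tau$-n-atomic factorizations. First, no $\pi_{i}$ is associate to any $\alpha_{j}$, for otherwise normality lets you pull the common factor to the front and cancel it, and Condition (**) guarantees the conjugated factors stay in their associate classes, so $y/\alpha_{j}$ inherits two distinct factorizations of length $m-1$, contradicting minimality. Then a cross edge $\pi_{k}\alpha_{l}\mid_{\tau}y$ gives $y=s\,\pi_{k}\alpha_{l}\,t$; factoring $s,t$ into normal $\tau$-irreducibles, moving $\pi_{k}$ to the front by normality, and cancelling yields two factorizations of $y/\pi_{k}$ of length below $m$ that are genuinely distinct (one contains the class of $\alpha_{l}$, the other only $\pi$-classes, and Condition (**) keeps these apart), again contradicting minimality. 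It is this descent, not an appeal to uniqueness, that makes the cross edge impossible.

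Two smaller points. First, in $1\Rightarrow 2$ and again in $4\Rightarrow 1$ you refine the non-irreducible cofactors of a $\tau$-divisor into normal $\tau$-irreducibles and treat the result as a $\tau$-factorization; that is the \emph{refinable} property of $\tau$, which is a hypothesis of the ACCP proposition in the paper but not of this theorem, so you are using an assumption you do not have. Second, your $V_{A}/V_{B}$ partition need not cover $V(\Gamma_{\tau}(x))$, since vertices are \emph{all} $\tau$-irreducible $\tau$-divisors of $x$, not only those appearing in your two chosen factorizations; an undirected path from an $a$-vertex to a $b$-vertex could pass through a vertex outside both classes, so ``no $V_{A}$--$V_{B}$ edge'' does not by itself give failure of weak connectivity. (The paper's own step ``weakly connected implies there is an edge connecting some $\pi_{k}$ and $\alpha_{l}$'' glosses over the same point, but your formulation makes the gap explicit rather than closing it.)
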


\begin{proof}
The following proof is a modification of the proof of Theorem 13 in \cite{paperphd1}.\\
$1 \Rightarrow 2$ Let $D$ be a $\tau $-n- UFD and $x$ any nonzero nonunit. Then we may factor $x$ as $x_{1}^{a_{1}}x_{2}^{a_{2}}...x_{l}^{a_{l}}$ where $a_{1},a_{2},...,a_{l}\in \mathbb{N}$ and $x_{1},x_{2},...,x_{l}$ are $\tau $-normal irreducibles (not necessary distinct). Since this is the only way to factor $x$ into $\tau $-normal irreducibles, we see that for every pair of distinct vertices $x_{i}(i=1,...,l)$ in $\Gamma _{\tau }(x)$, there is at least one edge. It
follows that $\Gamma _{\tau }(x)$ is a tournament.\newline
$2 \Rightarrow 3 \Rightarrow 4$ Clear.\newline
Now, we want to prove that $4 \Rightarrow 1$ if $D$ satisfies Condition (**). We show that the set $A$ of all $x\in D^{\#}$ that have at least two distinct $\tau $-factorization into $\tau $-normal irreducibles is empty. Assume otherwise and let $m:=\underset{z\in A}{\min }\{k:z=\pi _{1}\pi _{2}...\pi _{k}$ with $\pi _{i}$ $\tau $-normal irreducible for every $i\}$ clear $m\geq 2$. Thus there exists $y\in D^{\#}$ such that $y=\pi _{1}\pi _{2}...\pi _{m}$. Since $y\in A$, we have another (distinct) $\tau $-normal irreducible factorization of $y=\alpha _{1}\alpha_{2}...\alpha _{t}$ with each $\alpha _{j}(j=1,...,t)$ $\tau $-normal irreducible and $t\geq m$. We claim that each $\pi _{i}$ is nonassociate to each $\alpha _{j}$. Otherwise, if $\pi _{i}$ is associate to $\alpha _{j}$, then 
\begin{equation*}
y=\pi _{1}\pi _{2}...\pi _{i-1}u\alpha _{j}v\pi _{i+1}...\pi _{m}=\alpha_{1}\alpha _{2}...\alpha _{j-1}\alpha _{j}\alpha _{j+1}...\alpha _{t}.
\end{equation*}
Since $\alpha _{j}$ is normal, we have
\begin{equation*}
y=\alpha _{j}\pi _{1}^{\prime }\pi _{2}^{\prime }...\pi _{i-1}^{\prime}u^{\prime }v\pi _{i+1}...\pi _{m}=\alpha _{j}\alpha _{1}^{\prime }\alpha_{2}^{\prime }...\alpha _{j-1}^{\prime }\alpha _{j+1}...\alpha _{t}.
\end{equation*}
So we have an element, that can be denoted by $\frac{y}{\alpha _{j}}$, such that
\begin{equation}
\frac{y}{\alpha _{j}}=\pi _{1}^{\prime }\pi _{2}^{\prime }...\pi_{i-1}^{\prime }u^{\prime }v\pi _{i+1}...\pi _{m}=\alpha _{1}^{\prime}\alpha _{2}^{\prime }...\alpha _{j-1}^{\prime }\alpha _{j+1}...\alpha _{t},
\label{EQ}
\end{equation}
where $u,v,$ and $u^{\prime }$are units and $\pi _{p}^{\prime }(p=1,...,i-1)$ and $\alpha _{n}^{\prime }(n=1,...,j-1)$ are normal $\tau $-irreducible elements by Lemma \ref{Lemm t}. Therefore equation (\ref{EQ}) gives two distinct $\tau $-factorizations of $\frac{y}{\alpha _{j}}$ into $\tau $-normal irreducibles because $D$ satisfies Condition (**), contradicting the minimality of $m$. Thus $\pi _{i}$ is not an associate of any $\alpha _{j}$. Since $\Gamma _{\tau }(y)$ is weakly connected, it implies that there is an edge connecting $\pi _{k}$ and $\alpha _{l}$ for some $k$ and $l$. From Definition \ref{Def t}, we have $\pi_{k}\alpha _{l}\mid _{\tau }y$ or $\alpha _{l}\pi _{k}\mid _{\tau }y$. If $\pi _{k}\alpha _{l}\mid _{\tau }y$, then $y=x\pi _{k}\alpha _{l}z$ for some $x,z\in D^{\ast }$ (similar, if $\alpha _{l}\pi _{k}\mid _{\tau }y$). There are three cases.\newline
Case 1: $x$ and $z$ are units. (simple)\newline
Case 2: $x$ and $z$, one is unit and the other is nonunit. (same steps Case 3 )\newline
Case 3: $x$ and $z$ are nonunits. Since $x,z\in D^{\#}$ and $D$ is $\tau $%
-n-atomic,
\begin{equation*}
y=x_{1}x_{2}...x_{p}\pi _{k}\alpha _{l}z_{1}z_{2}...z_{w}=\pi _{1}\pi_{2}...\pi _{k-1}\pi _{k}\pi _{k+1}...\pi _{m},
\end{equation*}
where $x_{q}(q=1,...,p)$ and $z_{r}(r=1,...,w)$ are $\tau $-normal irreducible elements. Since $\pi _{k}$ is normal, we have
\begin{equation*}
y=\pi _{k}x_{1}^{\prime }x_{2}^{\prime }...x_{p}^{\prime }\alpha_{l}z_{1}z_{2}...z_{w}=\pi _{k}\pi _{1}^{\prime }\pi _{2}^{\prime }...\pi_{k-1}^{\prime }\pi _{k+1}...\pi _{m}.
\end{equation*}
Thus
\begin{equation}
\frac{y}{\pi _{k}}=x_{1}^{\prime }x_{2}^{\prime }...x_{p}^{\prime }\alpha
_{l}z_{1}z_{2}...z_{w}=\pi _{1}^{\prime }\pi _{2}^{\prime }...\pi_{k-1}^{\prime }\pi _{k+1}...\pi _{m}, \label{EQ2}
\end{equation}
\newline
where $x_{n}^{\prime }(n=1,...,p)$ and $\pi _{m}^{\prime }(m=1,...,k-1)$ are $\tau $-normal irreducible elements by Lemma \ref{Lemm t}. Therefore equation (\ref{EQ2}) gives two distinct $\tau $-factorizations of $\frac{y}{\pi _{k}}$ into $\tau $-normal irreducibles because $D$ satisfies Condition (**), contradicting the minimality of $m$. Therefore $A=\emptyset $ and $D$ is a $\tau $-n-UFD.
\end{proof}

\bigskip

In the undirected case, using the same procedure used in Theorem \ref{main t}, we get the following result:
\begin{theorem}
Let $D$ be a domain with a symmetric, associate preserving, right multiplicative, and left cancelitive relation $\tau $ on $D^{\#}$. If $D$ is a $\tau $-n-atomic domain.  Consider the following statements:
\begin{enumerate}
\item $D$ is a $\tau $-n-UFD;
\item $G_{\tau }(x)$ is complete for all $x\in D^{\#}$;
\item $G_{\tau }(x)$ is connected for all $x\in D^{\#}$.
\end{enumerate}
Then
 \begin{center}
  $1 \Rightarrow 2 \Rightarrow 3$.   
 \end{center}
Moreover, if $D$ satisfies Condition (**), then all statements are equivalent.
\end{theorem}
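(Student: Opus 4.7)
The plan is to follow the scaffolding of the proof of Theorem \ref{main t}, noting that by Definition \ref{Def t} an undirected edge $\{y_1,y_2\}$ in $G_{\tau}(x)$ corresponds to the disjunction $y_1y_2\mid_{\tau}x$ or $y_2y_1\mid_{\tau}x$, which is precisely the condition that produces at least one of the directed edges of $\Gamma_{\tau}(x)$ in a tournament. Thus each step of the directed-case proof translates with only cosmetic modifications: ``tournament'' is replaced by ``complete'' and ``weakly connected'' by ``connected''.

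For $1\Rightarrow 2$, given a $\tau$-n-UFD $D$ and $x\in D^{\#}$, I would write $x=x_1^{a_1}\cdots x_l^{a_l}$ with the $x_i$ normal $\tau$-irreducibles, and for any two distinct vertices $x_i,x_j$ of $G_{\tau}(x)$, use normality to commute factors so that $x_i$ and $x_j$ appear consecutively in some $\tau$-factorization of $x$. This yields $x_ix_j\mid_{\tau}x$, hence the undirected edge $\{x_i,x_j\}$, so $G_{\tau}(x)$ is complete. The step $2\Rightarrow 3$ is immediate since any complete graph is connected.

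For the converse $3\Rightarrow 1$ under Condition (**), I would mirror the argument $4\Rightarrow 1$ of Theorem \ref{main t}. Assume $D$ is not a $\tau$-n-UFD, let $A$ be the set of $x\in D^{\#}$ possessing two distinct $\tau$-n-atomic factorizations, and let $m$ be the minimum factorization length of any element of $A$. Pick $y=\pi_1\cdots\pi_m=\alpha_1\cdots\alpha_t$ with $t\geq m\geq 2$ and the two factorizations distinct. First rule out that any $\pi_i$ is associate to any $\alpha_j$: if $\pi_i\sim\alpha_j$, then using normality of $\alpha_j$ together with Lemma \ref{Lemm t} and Condition (**), I would cancel $\alpha_j$ from both sides to obtain two distinct normal $\tau$-irreducible factorizations of $y/\alpha_j$ of length strictly less than $m$, contradicting minimality. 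Connectedness of $G_{\tau}(y)$ then guarantees an edge $\{\pi_k,\alpha_l\}$ for some $k,l$, which by Definition \ref{Def t} means $\pi_k\alpha_l\mid_{\tau}y$ or $\alpha_l\pi_k\mid_{\tau}y$. In either subcase write $y=w_1\pi_k\alpha_lw_2$ (or $y=w_1\alpha_l\pi_kw_2$), atomically expand $w_1,w_2$ by $\tau$-n-atomicity, pull $\pi_k$ to the front of each side using its normality, cancel $\pi_k$, and apply Lemma \ref{Lemm t} and Condition (**) to produce two distinct normal $\tau$-irreducible factorizations of $y/\pi_k$ --- one containing an associate of $\alpha_l$ and the other not, since no $\pi_i$ is associate to any $\alpha_j$ --- again contradicting the minimality of $m$.

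The main obstacle, exactly as in Theorem \ref{main t}, is the bookkeeping required to commute factors past the normal element $\pi_k$ (or $\alpha_j$) and verify that the conjugated elements remain normal and $\tau$-irreducible; this is precisely the content of Lemma \ref{Lemm t}, which is why the hypotheses that $\tau$ be symmetric, associate preserving, right multiplicative, and left cancellative are all needed. Condition (**) then supplies the final ingredient: it forces the two cancelled factorizations on either side to remain distinct, so that the inductive contradiction with the minimality of $m$ actually closes.
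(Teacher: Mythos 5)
Your proposal is correct and follows exactly the route the paper intends: the paper's own ``proof'' of this theorem is a one-line remark that the undirected case is obtained by the same procedure as Theorem \ref{main t}, and your write-up is a faithful (indeed more explicit) translation of that directed argument, replacing ``tournament'' by ``complete'' and ``weakly connected'' by ``connected'' and reusing Lemma \ref{Lemm t} and Condition (**) in the same places. The only point to note is that your step ``connectedness guarantees an edge $\{\pi_k,\alpha_l\}$'' inherits verbatim the same abbreviated justification the paper uses in the directed case, so it is no weaker than the original.
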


The following example demonstrates that if $D$ is an atomic domain with a relation $\tau$ satisfies the conditions of Theorem \ref{main t}, then the Condition (**) is not redundant.
\begin{example}
  Let $D=\Bbb{Z}[1,i,j,k]$ and $\tau =D^{\#}\times D^{\#}$. Since $a=(1+i)$, $r=(1+j)$, and $r'=(1+k)$ are $\tau $-normal irreducible elements in $D$ such that $ar=r'a$, whereas $r$ and $r'$ are not associated. 
\end{example}
\bibliography{data2}

\begin{thebibliography}{1}

\bibitem{tauirreducible}
D.~D. Anderson and A.~M. Frazier.
\newblock On a general theory of factorization in integral domains.
\newblock {\em The Rocky Mountain Journal of Mathematics}, pages 663--705, 2011.

\bibitem{comprees}
M.~Axtell, N.~Baeth, and J.~Stickles.
\newblock Irreducible divisor graphs and factorization properties of domains.
\newblock {\em Communications in Algebra}, 39(11):4148--4162, 2011.

\bibitem{simplicial}
N.~Baeth and J.~Hobson.
\newblock Irreducible divisor simplicial complexes.
\newblock {\em Involve, a Journal of Mathematics}, 6(4):447--460, 2013.

\bibitem{irreducible}
J.~Coykendall and J.~Maney.
\newblock Irreducible divisor graphs.
\newblock {\em Communications in Algebra}, 35(3):885--895, 2007.

\bibitem{generalirreducible}
C.~P. Mooney.
\newblock Generalized irreducible divisor graphs.
\newblock {\em Communications in Algebra}, 42(10):4366--4375, 2014.

\bibitem{equivalentdef}
D.~Mordeson, M.~K. Sen, and D.~S. Malik.
\newblock Fundamentals of abstract algebra.
\newblock {\em The McCGraw-HILL Companies, Inc. New York st. Louis, san Francisco, printed in Singapore}, 1997.

\bibitem{paperphd1}
A.~Naser, R.~E. Abdel-Khalek, R.~M. Salem, and A.~M. Hassanein.
\newblock Types of irreducible divisor graphs of noncommutative domains.
\newblock {\em submitted}.

\bibitem{natomic}
A.~Naser, M.~H. Fahmy, and A.~M. Hassanein.
\newblock Non-commutative unique factorization rings with zero-divisors.
\newblock {\em Journal of Algebra and Its Applications}, 20(02):2150022, 2021.

\end{thebibliography}
\bibliographystyle{plain}
\end{document}